\documentclass{amsart}
\usepackage{amsmath,amscd,amsthm,amssymb,amsxtra,latexsym,epsfig,epic,graphics}
\usepackage[all]{xy}
\usepackage{MnSymbol}
\usepackage{color} 
\usepackage{booktabs}
\usepackage{multirow, color}
\usepackage{graphicx} 
\usepackage{amssymb}
\usepackage{amsrefs}
\usepackage{hyperref}
\usepackage{comment}
\usepackage{xcolor}
\usepackage{cite}
\usepackage{hyperref}

\usepackage{color}
\usepackage[all]{xy}
\newtheorem{theorem}{Theorem}[section]

\newtheorem{proposition}[theorem]{Proposition}

\theoremstyle{definition}

\newtheorem{example}[theorem]{Example}

\newtheorem{remark}[theorem]{Remark}

\newtheorem{question}[theorem]{Question}
\newtheorem{problem}[theorem]{Problem}

\newcommand{\Pn}{{\mathbb P}}
\newcommand{\PP}{{\mathbb P}}

\newcommand{\CC}{{\mathbb C}}
\newcommand{\ZZ}{{\mathbb Z}}
\newcommand{\QQ}{{\mathbb Q}}

\newcommand{\UU}{{\mathbb U}}

\newcommand{\FF}{{\mathbb F}}
\newcommand{\MM}{{\mathbb M}}
\newcommand{\GG}{{\mathbb G}}
\newcommand{\TT}{{\mathbb T}}

\newcommand{\cO}{\mathcal{O}}

\newcommand{\sO}{\mathcal{O}}
\newcommand{\sI}{\mathcal{I}}
\newcommand{\sF}{\mathcal{F}}
\newcommand{\sG}{\mathcal{G}}
\newcommand{\sE}{\mathcal{E}}
\newcommand{\sL}{\mathcal{L}}
\newcommand{\PGL}{\rm {PGL}}

\DeclareMathOperator{\SL}{SL}

\DeclareMathOperator{\rank}{rk}

\DeclareMathOperator{\coker}{coker}

\DeclareMathOperator{\Proj}{Proj}
\DeclareMathOperator{\Pic}{Pic}

\DeclareMathOperator{\Hom}{Hom}

\DeclareMathOperator{\Tor}{Tor}

\DeclareMathOperator{\Hilb}{Hilb}
\DeclareMathOperator{\Spec}{Spec}

\DeclareMathOperator{\syz}{syz}
\DeclareMathOperator{\h}{h}

\title{Nongeneral type surfaces in $\Pn^4$, an update}
\author[Hirotachi Abo]{Hirotachi Abo}
\address{H. Abo: Department of Mathematics and Statistical Science, University of Idaho, Moscow, Idaho 83844-1103, United States of
America}
\email{abo@uidaho.edu}
\author[Kristian Ranestad]{Kristian Ranestad}
\address{K.Ranestad: Matematisk institutt\\
         Universitetet i Oslo\\
         PO Box 1053, Blindern\\
         NO-0316 Oslo\\
         Norway}
\email{ranestad@math.uio.no}
\author[Frank-Olaf Schreyer]{Frank-Olaf Schreyer}
\address{F.-O. Schreyer: Mathematik und Informatik, Universit\"at des Saarlandes, 66123 Saarbr\"ucken, Germany}
\email{schreyer@math.uni-sb.de}
\date{July 2026}

\subjclass{14N25, 14J26, 14J27, 14J28, 14Q10, 13D02}
\keywords{Rational surface, non-general type surface, monad, exterior algebra, finite fields}

\begin{document}

\begin{abstract}
A general algebraic surface cannot be embedded in $\Pn^4$.  Proving a conjecture by Hartshorne and Lichtenbaum, Ellingsrud and Peskine showed that there is a degree bound for smooth rational surfaces in $\Pn^4$, and in fact for surfaces not of general type.  We give a survey of the classification status and of classical and computer-aided constructions of smooth nongeneral-type surfaces in $\Pn^4$. \end{abstract}

\maketitle

\tableofcontents

\section{Introduction}

Smooth surfaces in $\PP^3$ are rather special from a birational point of view: A smooth surface $X \subset \PP^3$ of degree $d$ is rational if $d\le 3$, 
a K3 surface  if  $d= 4$, and
a minimal surface of general type  if  $d\ge 5$.

On the other hand,
every smooth complex projective surface $X$ can be embedded in $\Pn^5$ by first embedding $X\subset \Pn^N$ for some $N$ and then, if $N>5$, projecting from a general linear space $L$ of codimension six to $\Pn^5$: The secant and tangent lines to $X$ in $\Pn^N$ fill a variety of dimension at most five that does not intersect $L$, so the projection of $X$ from $L$ is an embedding. A further projection to $\Pn^4$ will typically introduce singularities.

The numerical invariants of a smooth surface $X\subset \PP^4$ satisfy the \textbf{double point formula}
\cite[App A, Ex 4.1.3]{Har77}:
    \[d^2-10d-5H\cdot K_X-2K_X^2+12{\chi}(X) = 0\]
where $H$ denotes the hyperplane class, $d=H^2$ the degree, $K_X$ the canonical class, and $\chi(X)=\chi(\sO_X)$
the holomorphic Euler characteristic.
Equivalently,
    \[d^2-5d-5(2\pi(X)-2)-2K_X^2+12{\chi}(X) = 0\]
since the sectional genus $\pi(X)$ satisfies $2\pi(X)-2=H.(H+K_X)$, by the adjunction formula.
For a surface in $\PP^5$, the left-hand side of the formula gives the number of non-Cohen-Macaulay double points of a general projection into $\PP^4$; hence the name. 
Therefore, smooth surfaces in $\Pn^4$ are all special. 

Motivated by a conjecture of Hartshorne and Lichtenbaum  
about smooth rational surfaces, Ellingsrud and Peskine proved in 1989 the following:

\begin{theorem}[\cite{EP89}]
The degree of a smooth projective surface in $\PP^4$ of Kodaira dimension $\kappa(X)<2$ is bounded. 
Equivalently, only finitely many components of the Hilbert scheme of surfaces in $\PP^4$ contain points corresponding to smooth surfaces that are not of general type.    
\end{theorem}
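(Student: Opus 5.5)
The plan follows the strategy of Ellingsrud and Peskine: combine the double point formula with an upper bound on $\chi(\sO_X)$ and a lower bound on $K_X^2$ for non-general type surfaces, and show that these become incompatible once $d$ is large. The equivalence with the Hilbert scheme statement is then formal. Smooth surfaces of bounded degree in $\PP^4$ have sectional genus bounded by Castelnuovo, and $\chi$ is then bounded by the double point formula together with the bounds below. So only finitely many Hilbert polynomials occur, and each Hilbert scheme is of finite type.

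\textbf{Step 1: reduce to an inequality on $\pi$.} For $\kappa(X)<2$ the Enriques--Kodaira classification gives:
\begin{itemize}
\item if $X$ is minimal, then $K_X^2=0$ for $\kappa\ge 0$, while $K_X^2\in\{8,9\}$ for minimal ruled or rational surfaces;
\item $\chi(\sO_X)\ge 0$ except for ruled surfaces over curves of genus $q\ge 2$, where $\chi=1-q$;
\item $\chi\le 2$ for $\kappa=0$, and $K_X^2\le 9$ in all cases.
\end{itemize}
Write $X$ as a minimal model blown up in $r$ points, so $K_X^2=K_{\min}^2-r$. Also $12\chi=K^2+e$ by Noether, and $e\ge 0$ except in the ruled case. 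Substituting into the double point formula, $2K_X^2-12\chi(X)=d^2-5d-10(\pi-1)$ must be bounded above by something linear in $r$ and $q$. I would instead eliminate $K^2$ and $\chi$ in favour of $\pi$ and $d$, aiming for an inequality of the form $\pi\ge d^2/10-O(d)$ from the double point formula alone.

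\textbf{Step 2: the key geometric input.} The heart of Ellingsrud--Peskine is an upper bound on the sectional genus for surfaces not of general type. Consider the hyperplane sections. A general hyperplane section $C\subset \PP^3$ has degree $d$ and genus $\pi$. If $X$ lies on no surface of small degree $s$, then $C$ lies on no surface of degree $<s$, and Gruson--Peskine/Halphen type bounds give $\pi\le d^2/(2s)+O(d)$. If instead $X$ lies on a hypersurface of degree $s$ small compared to $\sqrt d$, I would use linkage or the adjoint system: $K_X+H$ is globally generated, and $H^0(\sO_X(K_X+H))$ relates to hypersurfaces through $X$ via the Koszul/liaison sequence. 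The goal is to show that non-general type forces $h^0(\sO_X(s-1)\otimes\sI)$ type vanishing up to $s\approx\sqrt{d}$. I expect the actual argument to estimate $\chi(\sI_X(n))$ by Riemann--Roch and force sections of $\sI_X(n)$ for $n\sim\sqrt{d}$. Taking $n$ minimal with $h^0(\sI_X(n))\ne0$ and a general $C$, one then gets $\pi\le d^2/(2n)+O(d)$. For this to contradict Step 1 we need $n\gtrsim 5$ roughly. So the real claim is that $X$ lies on no hypersurface of degree $\le 4$ when $d$ is large. This is handled by the classical fact that surfaces on low-degree hypersurfaces, after linkage via the Koszul complex, have adjunction $K_X=(\text{something})H-\text{effective}$ forcing general type when $d$ is large.

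\textbf{Main obstacle.} The hard step is exactly Step 2: showing that a non-general type surface of large degree cannot lie on a hypersurface of small degree, and otherwise has a sharp enough genus bound to beat the quadratic lower bound $\pi\gtrsim d^2/10$ from the double point formula. I would first try the refined Castelnuovo-type bound for curves in $\PP^3$ not on surfaces of degree $<s$, applied to $C$. For the low-$s$ case I would use that $\omega_X(-s+5)$ (via liaison with a complete intersection of type $(s,t)$) has sections, so $K_X$ is big once $t$ and $s$ are large. Balancing these two regimes should yield an explicit degree bound; Ellingsrud--Peskine obtain roughly $d\le 105$.
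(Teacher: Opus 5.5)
The paper gives no proof of this statement; it is quoted from \cite{EP89}. Judged on its own, your outline has a genuine gap, and it sits exactly where you place the main obstacle.

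\textbf{What works.} Several parts are fine, including the reduction to the Hilbert-scheme formulation. Every surface with $\kappa<2$ satisfies $K_X^2-6\chi(\sO_X)\le 3$, so the double point formula gives $10\pi\ge d^2-5d+4$. If the general hyperplane section $C$ lies on no surface of degree $\le 5$, the Gruson--Peskine bound applies: for curves in $\PP^3$ on no surface of degree $<6$ and $d>30$, it gives $\pi\le d^2/12+d+1$. This contradicts the lower bound once $d>90$.

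Here $s$ has to be defined through $C$, not through $X$. Your sentence that $X$ on no hypersurface of degree $<s$ forces $C$ on no surface of degree $<s$ is not automatic. The cokernel of $H^0(\sI_X(k))\to H^0(\sI_C(k))$ lies in $H^1(\sI_X(k-1))$, which is typically nonzero for these non-aCM surfaces. Passing from $C$ back to $X$ needs a Laudal/Roth-type lifting theorem for $d$ large compared with $s^2$.

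\textbf{The threshold is off by one.} For $s=5$ the Castelnuovo bound is $\pi\le d^2/10+d/2+1$. This has the same leading term as your lower bound $(d^2-5d+4)/10$, so no contradiction arises. It is $s\le 5$, not $s\le 4$, that must be excluded by a different argument.

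\textbf{No valid argument for $s\le 5$.} This range is the real content of Ellingsrud--Peskine, and the proposal does not handle it.
\begin{enumerate}
\item Linkage in a complete intersection of type $(s,t)$ gives $K_X=(s+t-5)H-D$ with $D=X\cap Y$ effective. This is an \emph{upper} bound on $K_X$ and cannot force bigness.
\item The claim that $\omega_X(5-s)$ has sections does not follow from linkage. Since $\omega_X(5-s-t)\cong\sI_Y/\sI_{X\cup Y}$, such sections are degree-$t$ forms through $Y$ modulo $(F_s,F_t)$, and nothing produces them.
\item The claim is also false in general. The rational surfaces of degree $11$ in Example~\ref{schreyer surfaces} have $h^0(\sI_X(4))=0$ and $h^0(\sI_X(5))=5$, so $s=5$, yet $p_g=0$.
\item Even if it held, an effective $K_X+(5-s)H$ with $5-s\ge 0$ says nothing about $\kappa(X)$. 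As you remark yourself, it only helps for large $s$, which is the regime already settled.
\end{enumerate}

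\textbf{What is missing} is a second, independent use of $\kappa(X)<2$ in the range $s\le 5$. For example, the double point formula together with $K_X^2\le 9$ gives $12\chi(\sO_X)\le 10\pi-d^2+5d+8$. Hence $\chi(\sO_X)\le d^2(5-s)/(12s)+O(d)$ when $C$ lies on a surface of degree $s\le 5$. One would then need a competing lower bound, for $\chi(\sO_X)$ or a comparable invariant, valid for surfaces of such large sectional genus whose sections lie on surfaces of degree $\le 5$. Handling this range is the hard part of \cite{EP89}. Without it, Step~2 is a heuristic rather than a proof.
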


At that time and earlier, there was a flourishing of activities to construct and classify smooth surfaces in $\PP^4$. Some of the constructions used Computer Algebra in an essential way.

In this paper, we give an overview of the ideas used to construct  examples of Kodaira dimension $<2$ and summarize the about 80 known families, of which about 40\% correspond to rational surfaces.

This might seem like a lot to report; however, from the point of view of ideas, there are not so many. We will devote one of the sections 
3 to 7 below to each new idea. 
Our accompanying Macaulay2 package ``NongeneralTypeSurfacesInP4'' \cite{ARS} is rather long. It contains code that produces a random surface over finite prime fields for each component known to us. These constructions prove the unirationality of most of the components.

Of course, since then the code and its speed have improved tremendously. In reviewing the constructions we discovered all together about 20 new families.
 
\subsubsection*{Open questions}
The following basic questions have motivated the search for new constructions of surfaces in $\Pn^4$. We supplement them with an extended  list of open problems in Section \ref{problems}.
\begin{itemize}
\item What is the maximum degree of a smooth rational surface in $\Pn^4$?
\item Is the irregularity $q=h^1(\sO_X)$ of a smooth surface in $\PP^4$
bounded above by $2$?
\item Is there a maximum degree for smooth surfaces in $\PP^4$ with a $(-1)$-line or even any $(-1)$-curve?
\end{itemize}

Ingrid Bauer, in \cite{Bau95}, showed that if $X$ is an inner projection from a smooth point of a surface in $\Pn^5$, then the degree is at most $9$.

\section{Birational classification}

Let $X$ be a smooth projective variety, and let $K_X$ denote a canonical divisor on $X$.
The canonical ring
\[R(X) =\oplus_{n\ge 0} H^0(X,nK_X)\]
is finitely generated by [BCHM10], and
\[\kappa(X)=\dim \Proj R(X)\]
is called the Kodaira dimension of $X$. If $\kappa(X) =\dim X$, then $X$ is called a variety of general type.

An irreducible curve $E$ on a smooth projective surface $X$ is called \textbf{(-1)-curve} if 
\[E^2= E.K_X=-1.\]
The adjunction formula implies $E \cong \PP^1$ and by Castelnuovo's contraction criterion \cite[Theorem V.5.7]{Har77} there exist a smooth surface $X'$ and a morphism $\sigma\colon X \to X'$ which contracts $E$
to a point $p \in X'$ and is biregular outside $E$. The morphism $\sigma$ coincides with the blowup of $X'$ at $p$. Both $X$ and $X'$ have the same canonical ring, the same irregularity $q=h^1(\sO_X)$, and the same geometric genus $p_g=h^0(\sO(K_X))=h^2(\sO_X)$. \medskip

\noindent
Since the topologies of $X'$ and $X$ differ by replacing $p$ with a $\PP^1$, we have $h^2(X',\QQ)=h^2(X,\QQ)-1$. Thus, one can blow-down (-1)-curves only finitely many times. 
A smooth projective surface is called \textbf{minimal} if it contains no (-1)-curves. 
Every smooth projective surface arises from a minimal surface by successively blowing up finitely many points. \medskip

\noindent
{\bf The Enriques-Kodaira classification.}
The Enriques-Kodaira classification groups minimal surfaces that are not of general type into several classes.

\begin{theorem}[e.g. \cites{BHPV04,Beau96}]
Let $X$ be a minimal surface of Kodaira dimension~$\kappa$ less than $2$.\\ 
   If $\kappa =-\infty$ then $X$ is isomorphic to $\PP^2$ or to a $\PP^1$-bundle over a curve $B$.\\
    If $\kappa=0$ then $X$ is one of the following 
    \begin{itemize}
        \item a K3 surface, $K_X=0$, $p_g=1$, $q=0$,
        \item an Enriques surface, $2K_X=0$, $p_g=0$, $q=0$,
        \item a bielliptic surface, $p_g=0$, $q=1$, such that $X \to E$ is a fiber bundle over an elliptic curve $E$ with fibers isomorphic to a fixed elliptic curve $F$,
        \item an abelian surface, $p_g=1$, $q=2$.
    \end{itemize}
  If $\kappa=1$, then $X$ is an elliptic fibration $X\to B$ over a curve $B$.
\end{theorem}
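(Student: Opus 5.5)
The plan is to split according to $\kappa$ and in each case use the numerical characters $K_X^2$, $p_g$, $q$, $P_n=h^0(nK_X)$, together with Riemann--Roch, Noether's formula $12\chi(\sO_X)=K_X^2+e(X)$ and Castelnuovo-type criteria. Minimality gives one key input: $K_X$ is nef whenever $\kappa\ge 0$. This is Mori's cone theorem on surfaces: an extremal $K$-negative curve on a minimal surface must either be a ruling or have $\kappa=-\infty$.

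\textbf{Case $\kappa=-\infty$.} First I show $X$ is ruled. By Enriques' theorem, $P_{12}=0$ (or $P_2=0$ with $q=0$ via Castelnuovo's rationality criterion) forces $X$ to be birationally ruled, i.e. birational to $B\times\PP^1$. The cone theorem then gives a $K_X$-negative extremal ray $R=\mathbb{R}_{\ge0}[C]$ with $C$ rational. On a minimal surface $C^2<0$ is impossible, since it would make $C$ a $(-1)$-curve. If $C^2=0$, then $C$ moves in a pencil and gives a $\PP^1$-fibration with no reducible fibres, hence a $\PP^1$-bundle over $B$ by Tsen's theorem. If $C^2>0$, then $\rho=1$ and $-K_X$ is ample; with $q=0$ this yields $X\cong\PP^2$.

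\textbf{Case $\kappa=0$.} Here $K_X$ is nef and numerically trivial, since $K_X^2>0$ would give $\kappa=2$ and $K_X^2=0$ with $K_X\not\equiv0$ would give $\kappa\ge1$ by Riemann--Roch and abundance on surfaces. Thus $K_X^2=0$, $e=12\chi$, and $p_g\le1$. Combining $b_1=2q$ with $e=2-4q+b_2\ge 2-4q$ and $\chi=1-q+p_g$ leaves only a few possibilities:
\begin{itemize}
\item $\chi=2$ gives $K_X=0$, $q=0$, $p_g=1$, a K3 surface;
\item $\chi=1$ gives $p_g=q=0$ and $2K_X=0$ after computing $P_2$, an Enriques surface;
\item $\chi=0$ gives $q=1,p_g=0$ or $q=2,p_g=1$.
\end{itemize}
In the $\chi=0$ cases one studies the Albanese map. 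If $q=2$ it is an isogeny onto an abelian surface, shown via the étale cover defined by torsion of $K_X$. If $q=1$ it is a fibration over an elliptic curve with elliptic fibres, hence bielliptic, using Bombieri--Mumford's analysis excluding quasi-bundles with wild fibres over $\CC$.

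\textbf{Case $\kappa=1$.} Take $m$ with $|mK_X|$ a pencil without fixed part after Zariski decomposition. Since $K_X$ is nef and $\kappa=1$, we have $K_X^2=0$, so the moving part $F$ satisfies $F^2=0$ and $K_X\cdot F=0$. The Stein factorization of the induced morphism gives a fibration $X\to B$ whose general fibre has genus $1$ by adjunction, i.e. an elliptic fibration.

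The main obstacle is the $\kappa=0$ case with $\chi=0$: showing the fibration is isotrivial and a genuine fibre bundle requires the canonical bundle formula for elliptic fibrations together with the vanishing of the Euler number.
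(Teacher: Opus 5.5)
The paper gives no proof of this statement; it quotes the Enriques--Kodaira classification from the standard references. Your outline follows the usual textbook strategy, but it has a concrete gap in the case $\kappa=0$.

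\textbf{The missing case $p_g=q=1$.} From $K_X\equiv 0$, $p_g\le 1$, Noether's formula and $b_1=2q$ you get $b_2=10+12p_g-8q$. The pairs allowed by $b_2\ge 1$ are $(p_g,q)=(0,0),(0,1),(1,0),(1,1),(1,2)$. Since $\chi(\sO_X)=1-q+p_g$, the value $\chi=1$ occurs for both $(0,0)$ and $(1,1)$. So your claim that ``$\chi=1$ gives $p_g=q=0$'' does not follow. A surface with $p_g=q=1$ is numerically consistent (it would have $b_2=14$, $e=12$) and has to be excluded geometrically.

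You can do this with the tool you already reserve for the bielliptic case:
\begin{enumerate}
\item $p_g=1$ and $K_X\equiv 0$ give $K_X\sim 0$.
\item The Albanese map is a fibration $X\to E$ onto an elliptic curve. Its fibres satisfy $K_X\cdot F=F^2=0$, so they are elliptic, and the fibration is relatively minimal because $X$ is minimal.
\item Kodaira's canonical bundle formula gives $K_X\equiv\bigl(\chi(\sO_X)+\sum_i(1-1/m_i)\bigr)F$ with $\chi(\sO_X)=1$. This has positive degree on an ample divisor, contradicting $K_X\equiv 0$.
\end{enumerate}
The same computation, with base of genus $2$, rules out that the Albanese image is a curve in your case $q=2$. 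Your sketch of the abelian case skips that point.

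\textbf{Thinner steps.} Several other steps need a sentence of justification.
\begin{enumerate}
\item In the case $\kappa=-\infty$, the cone theorem only yields a $K_X$-negative extremal ray once you know $K_X$ is not nef. This follows from birational ruledness: a ruling fibre $\tilde F$ on a blow-up $\tilde X\to X$ has $K_{\tilde X}\cdot\tilde F=-2$. It should be stated.
\item When $C^2=0$ you have $K_X\cdot C=-2$, but Riemann--Roch only gives $h^0(\sO_X(C))\ge 2-q$. For $q\ge 1$ the pencil must therefore come from Mori's contraction theorem or from the Albanese fibration, not from $C$ alone.
\item Deducing $K_X\equiv 0$ from abundance for surfaces is close to circular, since abundance for surfaces is usually derived from this very classification. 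The classical route uses only $K_X^2=0$ and $p_g\le 1$, and proves that $K_X$ is torsion separately in each numerical case.
\end{enumerate}
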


\noindent
{\bf Remarks.}
 \begin{enumerate}
     \item The minimal rational surfaces are $\PP^2$ and the Hirzebruch surfaces
     $F_n=\PP(\sO_{\PP_1}\oplus \sO_{\PP_1}(n)) \to \PP^1$ for $n\ge 0, \, n\not=1$.
     Note that $F_0 \cong \PP^1\times \PP^1$ and $F_1$ is isomorphic to the blow-up of $\PP^2$ at a point.
     \item Any two K3 surfaces are diffeomorphic as real 4-manifolds. Examples are
     quartic surfaces in $\PP^3$. The algebraic K3 surfaces form 19 dimensional
     subfamilies in a twenty dimensional family of possibly non-algebraic surfaces.
     \item Enriques surfaces are quotients of K3 surfaces by a fixed-point-free involution. They form 10-dimensional families.
     \item Bielliptic surfaces $X$ with the elliptic fibration
     $X \to E$, arise as finite quotients of a product of two elliptic curves $E'\times F$ by an automorphism $\tau$ which acts on $F$ with fixed points and on $E'$ by a
     translation. $E\cong E'/\tau$.
     \item Abelian surface arise as $\CC^2/L$ where $L \cong \ZZ^4$ is a lattice. For general $L$, these surfaces are non-algebraic.
     \item Some K3 surfaces, Enriques surfaces, and rational surfaces can also have elliptic fibrations, for example, the blowup $X$ of $\PP^2$ at the nine base points of a pencil of cubics. This example is also a surface that might have infinitely many (-1)-curves. Every section
     of the morphism $X \to \PP^1$ defined by the pencil is a (-1)-curve.
     \item The minimal model of a surface of Kodaira dimension $\kappa \ge 0$
     is uniquely determined.
     \item Surfaces of Kodaira dimension $\kappa=-\infty$ have many different
     minimal models. For example if $X\to B$ is a $\PP^1$-bundle and
     $p \in X$ a point, then the blowup $Bl_p(X)$ of $X$ at $p$ has another minimal model:
     Blowing down the strict transform of the fiber containing $p$ yields another minimal model.
 \end{enumerate}

\noindent
{\bf The adjoint linear system.}
The smooth surfaces not of general type in $\PP^4$ are usually not minimal. To compute their minimal models we can apply the adjunction process of Sommese and Van de Ven.

By adjunction, the adjoint divisor $H+K_X$ restricts to the canonical divisor $K_H$ on any smooth hyperplane section $H$ of $X$. So when $X$ is regular, the restriction of the map defined by the \textbf{adjoint linear system} $|H+K_X|$ to any smooth curve $H$ is the canonical map on $H$. 
The canonical map on a curve of genus $g\geq 2$ is an embedding if the curve is nonhyperelliptic, and a double cover of a rational normal curve if the curve is hyperelliptic. Thus, one expects a similar behavior for the map defined by $|H+K_X|$ on $X$. A (-1)-curve $E$ which is embedded as a line gets contracted
because $(H+K_X).E=0$. We call such a curve $E$ a (-1)-line.  Sommese and Van de Ven showed:

\begin{theorem}[\cite{SVdV87}, Adjunction]\label{adjunction mapping}  Let $X\subset \Pn^N$ be a smooth projective surface. The linear system $|H+K_X|$ defines a birational morphism
\[\varphi_{|H+K_X|}\colon X \to X' \subset \PP^{n'}\]
onto its image $X'$,
which blows down all (-1)-lines and is biregular otherwise, unless
\begin{enumerate}
    \item $X$ is a $\PP^2$ linearly or quadratically embedded or
    $X\to B$ is ruled by lines in $\PP^n$, in which case $|H+K_X|=\emptyset$.
    \item $X$ is a anti-canonical embedded Del Pezzo surface, in which case
    $\varphi_{|H+K_X|}$ maps $X$ to a point.
    \item $X \subset \PP^n$ is ruled by conics, in which case
    \[\varphi_{|H+K_X|}\colon X \to B \subset \PP^{n'}\]
    is the conic fibration.
    \item $X$ is a member of four families of surfaces, determined by Sommese and Van de Ven, in which case $\varphi_{|H+K_X|}$ defines a morphism that is generically finite to one onto its image instead of being birational.
\end{enumerate}
\end{theorem}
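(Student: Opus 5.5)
The strategy is the classical adjunction-theoretic one: first show that $|H+K_X|$ is base point free whenever it is nonempty and $X$ is not in the exceptional cases, then analyse the morphism it defines. The key tool is Reider's method, i.e.\ Bogomolov instability for rank two bundles obtained by the Serre construction. This is the route Sommese and Van de Ven take, and the one I would follow.

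\emph{Nonemptiness and case (1).} Assume $X$ is not as in (1). We use Riemann--Roch on $H+K_X$ together with Kodaira vanishing, $h^i(K_X+H)=0$ for $i>0$. This gives $h^0(K_X+H)=\chi(\sO_X)+\tfrac12(H+K_X)\cdot H = \chi(\sO_X)+\pi-1$. Since $\chi(\sO_X)\ge 1-q$, this number can only vanish when $\pi\le q$. One then checks that $\pi=q$ forces $X$ to be $\PP^2$ embedded by $\sO(1)$ or $\sO(2)$, or a scroll. The argument for the latter uses the Albanese map and the fact that hyperplane sections are curves of genus $q$ mapping onto $B$.

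\emph{Global generation.} Suppose $p$ is a base point of $|K_X+H|$. Put $L=H$, which is very ample with $L^2\ge 1$. Reider's theorem says: if $L^2\ge 5$ and $p$ is a base point of $|K_X+L|$, then there is an effective curve $E\ni p$ with either $L\cdot E=0,\ E^2=-1$, or $L\cdot E=1,\ E^2=0$. The first option is impossible for $H$ ample. The second means $E$ is a line with $E^2=0$, hence a fibre of a linear scroll, which is case (1). The small cases $H^2\le 4$ are checked by hand: planes, quadrics, the cubic scroll, the Veronese surface, and degree-4 del Pezzos and scrolls. So $\varphi=\varphi_{|H+K_X|}$ is a morphism.

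\emph{Structure of $\varphi$.} We have $(H+K_X)^2 = H^2+2H\cdot K_X+K_X^2$. If this is zero, then $\varphi$ maps to a curve or a point. For a point we need $H+K_X\sim 0$, i.e.\ $-K_X=H$, which is case (2). For a curve, the general fibre $F$ satisfies $(H+K_X)\cdot F=0$ and $F^2=0$, so $K_X\cdot F=-2$ and $H\cdot F=2$; the fibres are conics, which is case (3). If $(H+K_X)^2>0$, then $\varphi$ is generically finite. A curve $C$ is contracted iff $(H+K_X)\cdot C=0$. Hodge index then gives $C^2<0$, hence $K_X\cdot C<0$ and so $C^2=-1$, $K_X\cdot C=-1$, $H\cdot C=1$: exactly the $(-1)$-lines. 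The main obstacle is the last step, proving $\varphi$ is birational outside the four Sommese--Van de Ven exceptions. For this I would use Reider's criterion for separating points and tangent vectors, which requires $H^2\ge 10$. It says that failure produces a curve $E$ with $H\cdot E=0$, $E^2\in\{-1,-2\}$ (excluded by ampleness), or $H\cdot E=1$, $E^2\in\{0,-1\}$, or $H\cdot E=2$, $E^2=0$. The first is a line fibration, the second a $(-1)$-line that $\varphi$ contracts anyway, and the third a conic fibration. Away from $(-1)$-lines and these fibrations, $\varphi$ is therefore an embedding. The remaining low-degree cases with $H^2<10$, where $\varphi$ has degree $>1$, have to be classified directly, and that enumeration is what yields the four exceptional families (e.g.\ $\PP^2$ blown up in 7 or 8 points with specific embeddings, and $\PP^2$ with $H=\sO(3)$). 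This is precisely the content cited from \cite{SVdV87}.
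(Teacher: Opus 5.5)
The paper does not prove this theorem; it quotes it from \cite{SVdV87}. Your outline follows the Reider-based route of that source, and the steps you actually carry out are essentially sound: base-point-freeness for $H^2\ge 5$, the Hodge index argument that the contracted curves are exactly the $(-1)$-lines, the analysis of $(K_X+H)^2=0$, and separation for $H^2\ge 10$.

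The genuine gap is that all of case (4) lies in the range you leave open. The four exceptional families have $H^2=8,7,9,9$, so Reider's separation criterion never applies to them. For $H^2\le 9$ you give no argument, either for locating the non-birational cases or for showing that $\varphi$ is biregular off the $(-1)$-lines when it is birational. Saying that a direct classification ``yields the four families'' just restates the theorem.

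Your sample list shows the enumeration has not been done. $(\PP^2,\sO(3))$ is not a case-(4) exception: there $K_X+H=0$, so it is the degree-$9$ del Pezzo surface of case (2). The missing family is the elliptic ruled surface $\PP(\sE)$ with $H=3B$. From $K_X\equiv -2B+f$ one gets $K_X+H\equiv B+f$, $(K_X+H)^2=3$, $\pi=4$, and $h^0(\sO_X(K_X+H))=\chi(\sO_X)+\pi-1=3$, so $\varphi$ is $3:1$ onto $\PP^2$. This surface has $q=1$, so for a hyperplane section $C$ the map $H^0(\sO_X(K_X+H))\to H^0(\omega_C)$ is not surjective (its image has dimension $3<4=\pi$). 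Any low-degree argument that treats $\varphi|_C$ as the canonical map of $C$ must therefore handle $q>0$ separately, and (d) is precisely such a case.

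A smaller point concerns $H^2\ge 10$. Reider applied on $X$ shows only that $\varphi$ is an embedding away from the $(-1)$-lines and separates distinct ones; it does not show the image is smooth at the contracted points. To get ``blows down the $(-1)$-lines and is biregular otherwise'', proceed as follows. First, the $(-1)$-lines are disjoint: two meeting ones give $D=E_1+E_2$ with $D^2\ge 0$, $D\not\equiv 0$ and $D\cdot(K_X+H)=0$, contradicting Hodge index. Contract them by $\sigma\colon X\to X'$ and write $K_X+H=\sigma^*(K_{X'}+H')$, where $H'=\sigma_*H$ is ample with $H'^2=H^2+r\ge 10$. Then apply Reider to $H'$ on $X'$. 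Each possible obstruction curve pulls back to a line or conic on $X$ that would make $X$ a scroll or a conic bundle, or would give a $(-1)$-line that was not contracted or meets a contracted one.
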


We introduce notation for a linear system on a non-minimal surface $X$ with minimal model $X_0$. If $\pi: X\to X_0$ is the blow-up of $k$ points $p_1, \dots, p_k$ and if $E_1,...,E_k$ are the corresponding exceptional divisors on $X$, then any complete linear system on $X$ without base components is of the form 
\[
|H|=|\pi^*D-\sum_{i=1}^ka_iE_i|
\]
where $D$ is a divisor on $X_0$ and $a_1, \dots, a_k \ge 0$ are multiplicities. We may interpret $|H|$ as the sublinear system of $|D|$ of divisors which have multiplicity at least $a_i$ at $p_i$ for $i=1,\ldots,k$. We will denote the projective surface with the indicated hyperplane class $H$ by 
\[
X_0(D;a_1,..,a_k)
\]
in case $H$ is very ample.

When $X_0=\PP^2$, the Picard group $\Pic X \cong \ZZ^{k+1}$ has generators corresponding to the pullback of a general line $L$ and the exceptional curves $E_i$ with a diagonal intersection matrix with entries $1,-1,\ldots,-1$.
With \[\PP^2(a;a_1,\ldots,a_k)=\PP^2(a;b_1^{m_1},\ldots,b_r^{m_r})\subset \PP^n,\]
where $m_i=|\{j\mid a_j=b_i\}|$,
we have two notations for a projective surface isomorphic to a blown-up $\PP^2$ with hyperplane class as indicated. Hence $\{m_1,\ldots m_r\}$ is a partition of $k$.

The position of the points to be blown up is not specified in this notation.
We will see later that for rational surfaces described below, the points have to lie in a special position, see Examples \ref{fss}, \ref{second special surface}, and also Proposition \ref{codimOfSpecialCollectionOfPoints}.

\begin{remark}(Exceptional families of Sommese and Van de Ven) The four exceptional families in $(4)$ are
\begin{enumerate}
    \item[(a)] $\PP^2(6;2^7) \subset \PP^6$, 
    \item[(b)] $\PP^2(6;2^7,1)\subset \PP^5$,
    \item[(c)] $\PP^2(9;3^8)\subset \PP^6$,
    \item[(d)] $\PP(\sE) \subset \PP^5$ where $\sE$ is an indecomposable rank 2 vector bundle of degree 1 on an elliptic curve embedded by $H=3B$, where $B$ denotes a section with $B^2=1$.
\end{enumerate}
The target of the adjunction map in cases (a), (b), and (d) is $\PP^2$, and 
 $\varphi_{|H+K_X|}\colon X \to \PP^2$ is generically 2:1 in cases (a) and (b), and generically 3:1 in case (d). 
 The birational involution of $\Pn^2$ defined by the double cover in cases (a) and (b) is known as the Geiser involution. In case (c) the adjunction map
$\varphi_{|H+K_X|}\colon X \to Q\subset \PP^3$ is generically 2:1 onto a quadric surface cone~$Q$ in $\PP^3$. This double cover defines a birational involution of the plane known as the Bertini involution. For details on Geiser and Bertini involutions see \cite{Dol12}. 
\end{remark}

If $X$ is not one of the exceptions $(1),\ldots, (4)$ in Theorem \ref{adjunction mapping}, then the image of a (-1)-conic in $X \subset \PP^n$ becomes a (-1)-line in $X' \subset \PP^{n'}$. Thus, by repeatedly applying the adjunction mapping, we obtain a sequence
\[X \to X_1 \to X_{2} \to \cdots \to X_r\]
of morphisms that ends either with one of the exceptions in Theorem \ref{adjunction mapping} or with a minimal surface. 

For rational surfaces, we will always encounter one of the exceptions.  Computing the exceptional curves at each step
provides a way to describe $X$ as a blowup of $X_{r}$ and analyzing the surface $X_{r}$ will yield a description of $X$ as the blowup of a minimal surface.\\

\noindent{\bf 6-secant lines.}
The variety of lines in $\PP^4$ that intersect a given surface $X \subset \PP^4$ has codimension one. Thus, since the Grassmannian $\GG(2,5)$ has dimension six, we expect that $X$ has finitely many $6$-secant lines.
Le Barz, in \cite{Leb81}, gave a formula for this number. There is a polynomial function
\begin{equation}\label{LeBarz}
N_6 = p(d,\pi,\chi)
\end{equation}
depending on the degree, sectional genus $\pi$, and holomorphic Euler characteristic $\chi$ of $X$, and its value coincides with
the number of $6$-secant lines plus the number of $(-1)$-lines, provided that there are only finitely $6$-secants.

If the homogeneous ideal of $X$ is generated by quintics, then there are no $6$-secant lines, and the number of $(-1)$-lines is a function of $d,\pi$ and $\chi$.

In general, we can obtain information about $6$-secants and $(-1)$-lines by considering the residual scheme of $X$ in the scheme $X_5$ defined by forms of degree less than or equal to~$5$ in the homogeneous ideal of $X$.

\begin{example}[$d=8$, $\pi=6$, and $p_g=q=0$]
\label{first special surface} 
A smooth surface $X\subset \PP^4$ with these invariants satisfies
\[K_X^2=-7,\, H.K_X= 2,\hbox{ and } N_6=12.\]
Hence, it has the intersection matrix
$$
\begin{pmatrix}
    H^2 & H.K_X \cr
    H.K_X & K_X^2
\end{pmatrix}
= \begin{pmatrix}
    8 & 2 \cr
    2 & -7
\end{pmatrix}. 
$$       

The homogeneous ideal of the surface $X$, constructed below in Example \ref{fss} via liaison, is generated by forms of degree less than or equal to $5$. Hence, Le Barz's formula indicates that $X$ has twelve $(-1)$-lines. Furthermore, the first adjoint surface $X_1$
has the intersection matrix
$$
\begin{pmatrix}
    H_1^2 & H_1.K_{X_1} \cr
    H_1.K_{X_1} & K_{X_1}^2
\end{pmatrix}
= \begin{pmatrix}
    5 & -5 \cr
    -5 & 5
\end{pmatrix}. 
$$
Since $H_1^2=(H+K_X)^2=5$, one obtains $H_1.K_{X_1}=(H+K_X).K_X=-5$, and
$K_{X_1}^2=K_X^2+12=5$.

By the Hodge index theorem \cite[Theorem V.1.9]{Har77}, we deduce $H_1=-K_{X_1}$. Thus $X_1$ is a Del Pezzo surface $\PP^2(3;1^4) \subset \PP^4$ of degree $5$. Since $X$ is the blowup of $\PP^2$ in $-K_X^2+9=7+9=16$ points, one gets $X=\Pn^2(6;2^4,1^{12})$.
\end{example}

\section{Linear system and liaison}

Using the adjunction mapping provides a first approach to finding smooth rational surfaces in $\PP^4$.  

Aure and Lanteri showed that smooth scrolls in $\Pn^4$ have degree $3$ or $5$ (\cite{Au87},\cite{La80}). The Veronese surface has degree $4$ and is the only other surface in $\Pn^4$ with an empty adjoint linear system. The adjoint linear systems of all other surfaces define morphisms, and Theorem \ref{adjunction mapping} applies.

Let $X\subset \Pn^4$ be a smooth rational surface. Then its embedding is given by a linear system determined by a divisor and a finite set of points on a minimal model. The minimal model is naturally obtained via the adjunction described above, whereas the points may require more care.  We give some examples.

\begin{example}\label{Bordiga1} Suppose $X\subset \Pn^4$ is a smooth rational surface of degree $6$ and sectional genus $3$. Then $H\cdot (H+K_X)=4$ and $K_X^2=-1,$ so $(H+K_X)^2=1$. Furthermore, $h^0(\sO_X(H+K_X)) = \chi(\sO_X(H+K_X)) = H.(H+K_X)/2 +\chi (\sO_X) = 3$. Thus, it follows from Theorem~\ref{adjunction mapping} that the adjunction mapping $\varphi_{H+K_X}:X \to \Pn^2$ is a birational morphism. Since $K_{\Pn^2}^2=9,$ the surface $X$ is the blowup of $K_{\Pn^2}^2-K_X^2=10$ points, and it contains ten disjoint $(-1)$-lines. So $X=\PP^2(4;1^{10})$. 

Conversely, the linear system of plane quartic curves through ten generic points is four-dimensional and yields a smooth surface of degree~$6$ and sectional genus~$3$ in $\PP^4$. The surfaces obtained in this way are called Bordiga surfaces, named after Giovanni Bordiga \cite {Bo1887}, and will play an important role in Example \ref{Abo surfaces}. 
\end{example}

Let $X=\PP^2(a;a_1,..,a_r) \subset \PP^4$ be a rational surface. 

Then the dimension of the linear system $|H|$ is at least 
\[
\chi(\sO_X(H))=\binom{a+2}{2}-\sum_{i=1}^r\binom{a_i+1}{2}.
\]
We say that $H$ is non-special if $h^0(\sO_X(H))=\chi(\sO_X(H))$, which occurs when the $\binom{a_i+1}{2}$ conditions imposed by the $r$ points are independent in the $\binom{a+2}{2}$-dimensional space of forms of degree $a$.
The difference
\[s=h^0(\sO_X(H))-\chi(\sO_X(H))= h^1((\sO_X(H))=5-\binom{a+2}{2}+\sum_{i=1}^r\binom{a_i+1}{2}\]
is called the speciality of $X$.

If a rational surface in $\PP^4$ is non-special (i.e., $s=0$), then its degree is at most~$9$. Furthermore, for each degree, there is one irreducible family of non-special rational surfaces in $\PP^4$ that is a dense set in an irreducible Hilbert scheme component, and the classification of these families was completed by Alexander \cite{Al88}.

If a rational surface $X$ in $\PP^4$ has a higher degree, then the collection of points $p_1, \dots, p_r$ blown up by $X \rightarrow \PP^2$ are in a special position. The collection is a point in $(\Pn^2)^r$.  Here we ignore the possibility of points being infinitely near for simplicity of argument. That case is entirely similar. 

\begin{proposition}\label{codimOfSpecialCollectionOfPoints}
The special collections of points leading to smooth rational surfaces $X$ have codimension $\le 5s$ in $(\PP^2)^r$ where $s=h^1(\sO_X(H))$.
\end{proposition}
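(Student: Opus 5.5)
We set up the problem as a degeneracy locus of a map of vector bundles on $(\PP^2)^r$ and apply the standard codimension bound for such loci. Write $V=H^0(\PP^2,\sO(a))$, so $\dim V=\binom{a+2}{2}$. For a point $p=(p_1,\dots,p_r)\in(\PP^2)^r$ let $Z_p$ be the fat point scheme $\sum a_ip_i$, defined by $\prod_i \mathfrak m_{p_i}^{a_i}$, of length $N=\sum_i\binom{a_i+1}{2}$. Taking fibers of the structure sheaves of the universal fat point schemes gives a vector bundle $\sF$ of rank $N$ on $(\PP^2)^r$, or on the open set of distinct points; infinitely near points are handled the same way. Evaluation gives a bundle map
\[\phi\colon V\otimes\sO_{(\PP^2)^r}\to \sF,\]
whose fiber at $p$ is the restriction $V\to H^0(\sO_{Z_p}(a))$. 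Its kernel is $H^0(\sO_X(H))$ for the blown-up surface. Writing $h^0=h^0(\sO_X(H))$, we have $\dim\ker\phi_p=h^0$ and $\operatorname{rank}\phi_p=\dim V-h^0$.

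\textbf{Step 1: translating speciality into rank.} For $X$ embedded in $\PP^4$ by the complete system we need $h^0=5$; if the system is incomplete, $h^0\ge 5$ and the same argument works with the inequality. By definition
\[s=h^0-\chi=5-\dim V+N.\]
Hence $\operatorname{rank}\phi_p=\dim V-5=N-s$. So the special collections lie in the locus
\[D=\{p:\operatorname{rank}\phi_p\le N-s\}.\]
For a generic collection, where the conditions are independent and $N\le \dim V$, the rank is $\min(N,\dim V)=N$. So generically $\phi$ has rank $N$ and we are asking for a rank drop of $s$.

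\textbf{Step 2: the degeneracy locus bound.} The expected codimension of the locus where a map from a rank-$m$ bundle to a rank-$n$ bundle has rank $\le k$ is $(m-k)(n-k)$. By the determinantal (Eagon--Northcott) bound, every irreducible component of that locus that is nonempty has codimension at most $(m-k)(n-k)$. Here $m=\dim V$, $n=N$ and $k=N-s$, so
\[(m-k)(n-k)=(\dim V-N+s)\cdot s=5s.\]
This is exactly the bound claimed.

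\textbf{Step 3: the main subtlety.} The statement concerns the collections that actually lead to smooth surfaces, and this set is an open subset of $D$: very ampleness and exact speciality $s$ are open conditions, and $h^0=5$ exactly. So each component of that set is an open subset of a component of $D$, and the codimension bound passes to it. The only real point to verify is that the determinantal codimension bound applies globally on the (smooth, irreducible) parameter space, which is standard since locally $D$ is cut out by the vanishing of minors, i.e.\ it is the pullback of the generic determinantal variety. If $N>\dim V$ the same count goes through with $k=\dim V-5$, and it still gives $(\dim V-k)(N-k)=5(N-\dim V+5)=5s$.
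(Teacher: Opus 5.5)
Your argument is correct and is essentially the paper's: the evaluation map $V\otimes\sO\to\sF$ is a global two-term representative of $R\pi_*\sO_{\widetilde X}(\widetilde H)$ on the open set $U$ of distinct points (only there is $\sF$ actually locally free), and splitting off its invertible rank-$(N-s)$ part at the given point leaves exactly the paper's minimal complex $A^5\to A^s$. Locally, the Eagon--Northcott height bound $(m-k)(n-k)=5s$ that you invoke is precisely the paper's application of Krull's principal ideal theorem to the $5s$ entries of that $s\times 5$ matrix.
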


To prove this recall the following theorem.
\begin{theorem}[Theorem III.12.11 of \cite{Har77}]
\label{minimal complex}
Let $(A,{\mathfrak m},K)$ be a local Noetherian ring and let $\sG$ be a coherent sheaf on $\PP^n_A$. Let $\pi\colon \PP^n_A \to \Spec A$ denotes the projection. If $\sG$ is flat over $\Spec A$ then the complex 
$R\pi_{\ast}\sG \in D^b(A)$ is represented by a minimal complex
\[0 \to A^{h^0} \to A^{h^1} \to \cdots \to A^{h^n} \to 0\]
of free A-modules, which is unique up to isomorphism.
The
formation of this complex commutes with base change, in particular 
$h^i= \dim H^i(\sG\otimes K)$.  
\end{theorem}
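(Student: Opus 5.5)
The plan is to reduce to the Grothendieck complex and then minimalize over the local ring. Since $\pi\colon\PP^n_A\to\Spec A$ is projective and $\sG$ is coherent and flat over $A$, I will compute $R\pi_*\sG$ with the \v{C}ech complex $C^\bullet$ of $\sG$ for the standard affine cover $U_0,\dots,U_n$. It is a complex of flat $A$-modules, zero outside degrees $0,\dots,n$, and it commutes with base change: $C^\bullet\otimes_A M$ is the \v{C}ech complex of $\sG\otimes_A M$. Its cohomology $H^i(\sG)$ is finitely generated over $A$ by Theorem III.8.8 of \cite{Har77} (finiteness of cohomology for projective morphisms).

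\textbf{Step 1: a finite free model.} Following the proof of Theorem III.12.2 in \cite{Har77}, I will build a bounded complex $L^\bullet$ of finitely generated free $A$-modules, $L^i=0$ for $i>n$, with a quasi-isomorphism $L^\bullet\to C^\bullet$. The construction is by descending induction, choosing finitely many generators that hit cycles and cohomology, and it uses only that $H^i(C^\bullet)$ is finitely generated. Since both complexes consist of flat modules and are bounded above, the map stays a quasi-isomorphism after tensoring with any $M$. A cone argument does this: the cone is an exact bounded-above complex of flats, so it remains exact after tensoring. The one remaining issue is degrees below $0$. Because $C^\bullet$ has no terms there, the kernel $Z$ in degree $0$ of $L^\bullet$ is flat, being a syzygy in an exact bounded-above complex of flats. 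Over a Noetherian local ring a finitely generated flat module is free, so I truncate to $0\to L^0/\mathrm{im}\to L^1\to\cdots$ with free terms. This gives $L^\bullet$ in degrees $[0,n]$ representing $R\pi_*\sG$ universally.

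\textbf{Step 2: minimalization.} Suppose some differential $d^i\colon L^i\to L^{i+1}$ has an entry that is a unit. Then a change of bases splits off a summand $A\xrightarrow{\ \sim\ }A$, which is a contractible subcomplex. Removing it preserves the homotopy type, and hence base change. Repeating this finitely many times (the total rank drops each time) gives a complex $F^\bullet$ with $d(F)\subset\mathfrak m F$. Tensoring with $K$ kills all differentials, so $F^i\otimes K = H^i(F\otimes K)=H^i(\sG\otimes K)$, which shows $\operatorname{rank}F^i=h^i$.

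\textbf{Step 3: uniqueness.} Let $F,F'$ be two minimal complexes that are isomorphic in $D^b(A)$. Since they are bounded complexes of projectives, the isomorphism comes from a homotopy equivalence $f\colon F\to F'$ with inverse $g$. Then $gf-\mathrm{id}=dh+hd$ has image in $\mathfrak m F$ by minimality. So $gf$ is congruent to the identity mod $\mathfrak m$, and by Nakayama it is an automorphism in each degree; the same holds for $fg$. Therefore $f$ is an isomorphism of complexes.

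I expect the main obstacle to be Step 1, specifically producing a free complex concentrated in $[0,n]$ that is compatible with arbitrary base change. The flatness and local-freeness truncation argument is the delicate point, and I would model it on Mumford's treatment in \emph{Abelian Varieties}, \S5.
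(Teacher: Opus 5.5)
Your route is the standard one behind the citation; the paper gives no proof of its own, only the reference to \cite{Har77}. It runs: a bounded-above free model as in Hartshorne III.12.2--12.3, Mumford's truncation to degrees $[0,n]$, splitting off unit entries, and Nakayama for uniqueness. The cone argument for base change is correct, and so are Step~2 and Step~3. The truncation in Step~1, however, is wrong as written.

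\textbf{The flatness claim in Step~1.} You claim that $Z=\ker(d^0\colon L^0\to L^1)$ is flat. This is false in general, and it is not what the truncation needs.
\begin{itemize}
\item Once $L^0/\mathrm{im}\,d^{-1}$ is known to be free, $\mathrm{im}\,d^{-1}$ is a direct summand of $L^0$. Hence $Z\cong \mathrm{im}\,d^{-1}\oplus H^0(\PP^n_A,\sG)$, so $Z$ is flat only if $H^0(\PP^n_A,\sG)$ is.
\item That fails already in a simple case. Take $A=K[\epsilon]/(\epsilon^2)$, a smooth plane cubic $E\subset\PP^2_K$, and the line bundle $\sG$ on $E\times_K\Spec A$ (viewed on $\PP^2_A$) given by a nonzero class in $H^1(E,\cO_E)$. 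Then the minimal complex is $0\to A\xrightarrow{\epsilon}A\to 0$ and $H^0(\sG)\cong K$, which is not flat.
\item Conversely, flatness of a kernel, or even of the image $\mathrm{im}\,d^{-1}$, says nothing about flatness of the cokernel you truncate to. Compare $A\xrightarrow{a}A$: the image is free, the cokernel is not.
\end{itemize}

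\textbf{The fix.} The module that must be flat is $L^0/\mathrm{im}\,d^{-1}$, and your own cone argument gives exactly this.
\begin{itemize}
\item Since $C^{-1}=0$, one has $\mathrm{Cone}(g)^{-1}=L^0$.
\item The cycles in that degree are exactly $\mathrm{im}\,d_L^{-1}$, because $H^0(g)$ is injective and $H^0(C)=Z^0(C)\subset C^0$.
\item Therefore $L^0/\mathrm{im}\,d_L^{-1}\cong Z^0(\mathrm{Cone}(g))$. This is a cycle module of an exact bounded-above complex of flat modules, hence flat, hence free.
\end{itemize}
Equivalently, as in Mumford: $\cdots\to L^{-1}\to L^0\to L^0/\mathrm{im}\,d^{-1}\to 0$ is a free resolution, and $\Tor_j^A(L^0/\mathrm{im}\,d^{-1},M)=H^{-j}(L\otimes M)=H^{-j}(C\otimes M)=0$ for $j\ge 1$.

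Finally, $g^0$ kills $\mathrm{im}\,d^{-1}$ because $C^{-1}=0$. So $g$ factors through the truncated complex, which therefore remains quasi-isomorphic to $C^\bullet$ after any base change. With this correction the argument is complete.
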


{\it Proof} of Proposition \ref{codimOfSpecialCollectionOfPoints}.
 Let $U \subset (\PP^2)^r$ be the open set of $r$ distinct points and let
$\widetilde X \to U$ denote the family of blowups $\PP^2$ in these point, 
Let $\sG= \sO_{\widetilde X}(\widetilde H)$ be the universal family of line bundles with assigned multiplicities.
Let $A$ be the local ring of the point corresponding to our surface. By the Theorem \ref{minimal complex}, 
$ R\pi_{\ast}(\sO_{\widetilde X}(\widetilde H))$ is locally represented by a complex 
\[ 0 \to A^5 \to A^s \to 0.\]
Thus the subvariety of good collections is locally defined by the entries of the $s\times 5$ middle matrix. By the principal ideal theorem \cite[Theorem 10.2]{Eis95}, its codimension is at most $5s$.  \qed \medskip

On the subvarieties of good collections in $(\Pn^2)^r$ very little is known beyond this expected codimension.  A characterization of the special collections of points has been given in three cases with $s=1$  and one case with $s=2$ (see Examples \ref{fss} and \ref{second special surface} and \cite{AR92},\cite{Ra88}), and is an open question for a number of rational surfaces with $s\geq 2$ that are otherwise known to exist.

Before we discuss further examples, consider liaison (also known as linkage), which sometimes provides a quick way to construct new surfaces from known ones.
\begin{remark}[Liaison]\label{linkage}
  Two surfaces $X$ and $Y$ in $\Pn^4$ are said to be (geometrically) linked if their union $X\cup Y$ is the complete intersection of two hypersurfaces. If  the two hypersurfaces have degree $e$ and $f$ respectively, then 
\[\deg X+ \deg Y= e\cdot f\] and we say that $X$ and $Y$ are linked in a $(e,f)$ complete intersection.   There are the following exact sequences of sheaves associated to linkage, cf. \cite{PS74},
\begin{equation*}
0\to \sO_X(K_X)\to \sO_{X\cup Y}(e+f-5)\to\sO_{Y}(e+f-5)\to 0
\end{equation*}
and
\begin{equation*}
0\to \sO_X(K_X)\to \sO_X(e+f-5)\to\sO_{X\cap Y}(e+f-5)\to 0.
\end{equation*}
These sequences for linkage between curves in $\Pn^3$ yield the following relation between the sectional genera:
\begin{equation*}
\pi(X)-\pi(Y)=\frac{1}{2}(e+f-4)({\rm deg} X-{\rm deg} Y).
\end{equation*}   
\end{remark}

\begin{example}[Example \ref{first special surface} continued] \label{fss} Consider a smooth rational surface $X\subset \Pn^4$  of degree $8$ and sectional genus $6$. 

A simple way to construct this surface is to use liaison.
The ideal of the surface is generated by a cubic form and some quartic forms.  This means that in the complete intersection of the cubic form and a general quartic form in the ideal, the residual (linked) surface is a surface $Y$ of degree four.  In fact, $Y$ is a projected Veronese surface. This linkage can, of course, be reversed to construct the surface $X$ as linked to a Veronese surface in a cubic and a quartic.  Since the ideal of the Veronese surface is generated by cubic forms, a general choice of linkage yields a smooth surface of degree 8, sectional genus 6, and $p_g=q=0$. So by Example \ref{first special surface} $X=\PP^2(6;2^4,1^{12}) \subset \PP^4$ with $s=1$.

We explain how one can choose the $16$ points next. For a different description, see \cite{CH97}.\medskip
\begin{proposition}[After Ellingsrud and Peskine]
Let $X=\PP^2(6;2^4,1^{12})\subset \PP^4$, be the blow-up in sixteen distinct points $p_1,..,p_4, q_1,..,q_{12}\subset \Pn^2$.
Then there are a quartic curve and a quintic curve in $\PP^2$ that are smooth at the points $p_i$ and $q_j$ with shared tangent lines at the $p_i$. In particular, the complete intersection of these curves is the union of the points $q_i$ and four schemes of length two, each supported at a corresponding $p_i$.
Conversely, a general such choice of sixteen points yields a smooth surface.
\end{proposition}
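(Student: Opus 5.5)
The plan is to show that the sixteen points are special because $H$ has speciality $s=h^1(\sO_X(H))=1$ (Example \ref{fss}), and to convert this one extra section into the two plane curves. The bookkeeping is done on the blow-up $Y\to X$ at the four points $t_i\in E_{p_i}$ that record the common tangent directions. I have not checked the direction of this conversion in detail, and I expect it to be the hard part.

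\textbf{Step 1: numerics.} Write $L$ for the pullback of a line, and $E_{p_i},E_{q_j}$ for the exceptional curves. Then $H=6L-2\sum E_{p_i}-\sum E_{q_j}$ and $K_X=-3L+\sum E_{p_i}+\sum E_{q_j}$. This gives $\chi(\sO_X(H))=28-12-12=4$, so $h^0(\sO_X(H))=5$ forces $h^1(\sO_X(H))=1$. By Serre duality $h^1(\sO_X(K_X-H))=1$, where
\[H-K_X=9L-3\textstyle\sum E_{p_i}-2\sum E_{q_j}.\]
For an effective divisor $C$, the sequence $0\to\sO(-C)\to\sO\to\sO_C\to0$ gives $h^1(\sO(-C))=h^0(\sO_C)-1$. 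So I want a member of this class, or of a suitable modification of it, whose structure sheaf has two independent sections, i.e. a curve with two connected components.

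\textbf{Step 2: the two curves.} Let $\sigma\colon Y\to X$ blow up the points $t_i\in E_{p_i}$, with new exceptional curves $F_i$. Set
\[Q_4=4L-\textstyle\sum(E_{p_i}+F_i)-\sum E_{q_j},\qquad Q_5=5L-\sum(E_{p_i}+F_i)-\sum E_{q_j}.\]
Here $E_{p_i}$ denotes total transforms, so each $Q_k$ is the class of a plane curve of degree $k$ that is smooth at every point and has the prescribed tangent at each $p_i$. Then $Q_4\cdot Q_5=20-8-12=0$. Hence a quartic and a quintic with these properties have strict transforms on $Y$ that are disjoint. Equivalently, all $4\cdot 5=20$ intersection points are accounted for by the $12$ points $q_j$ and four length-two schemes at the $p_i$, which is the ``in particular'' of the statement.

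To get existence I would work with the sequence
\[0\to\sO_Y(\sigma^*H-Q_4)\to\sO_Y(\sigma^*H)\to\sO_{Q_4}(\sigma^*H)\to0,\]
and with the analogous one for a general conic $D\in|2L-\sum E_{p_i}|$. This conic is a rational quartic curve on $X$, with $D^2=0$, and the pencil $|D|$ is a conic-bundle-like fibration. The restriction $H^0(\sO_X(H))\to H^0(\sO_D(H))\cong k^5$ has kernel $H^0(\sO_X(4L-\sum E_{p_i}-\sum E_{q_j}))$, and $\chi(4L-\sum E_{p_i}-\sum E_{q_j})=-1$.

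The key claim is that the nonzero class in $H^1(\sO_X(H))$ forces the members of the pencil $|D|$ to be degenerate. This means that some, and I will try to show every, $D$ spans only a $\PP^3$. That yields a quartic through all sixteen points with the tangency condition at the $p_i$ encoded on $E_{p_i}$. The quintic should then come from the cubic hypersurface containing $X$ (Example \ref{fss}), restricted and pulled back to the plane. The check that both curves pass through the same infinitely near points $t_i$ comes from $Q_4\cdot Q_5=0$ together with the count above. This step is the main obstacle. My first attempt would be to compute $h^0$ and $h^1$ of $Q_4$ on $Y$ by exploiting the linkage $X\cup V=$ (cubic)$\,\cap\,$(quartic) with $V$ the projected Veronese surface. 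I would restrict that linkage to a fibre $D$ to see that $D$ lies in a hyperplane.

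\textbf{Step 3: converse.} Start from a general quartic $C_4$ and quintic $C_5$ that are smooth, share tangents at four of their intersection points and meet transversally at twelve more; such pairs form a family of the expected dimension. Then $Q_4+Q_5$ is a disconnected effective divisor on $Y$, so $h^0(\sO)=2$ on it, and the duality argument of Step 1 run backwards gives $h^1(\sO_X(H))\ge1$. Hence $h^0(\sO_X(H))\ge5$, so $|H|$ maps $X$ to $\PP^4$. To prove that this map is an embedding for a general choice I would not argue directly. Instead I would exhibit one example, for instance over a finite field with the package \cite{ARS}, where the image is the smooth surface linked to a Veronese surface as in Example \ref{fss}. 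Openness of smoothness and of very ampleness in the family of sixteen-point configurations then gives the claim for a general choice.
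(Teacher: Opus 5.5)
Your proposal has a genuine gap in Step~2, the necessity direction: it is not yet a proof, and two of its ingredients fail.

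\textbf{The gap in Step~2.} Your reduction is fine: a conic $D\in|2L-\sum E_{p_i}|$ spans only a $\PP^3$ if and only if $h^0(\sO_X(4L-\sum E_{p_i}-\sum E_{q_j}))\ge 1$. But restricting to $D$ only gives $h^0-h^1=-1$ for $\sO_X(H-D)$, which is compatible with there being no quartic. So the claim that ``the class in $H^1(\sO_X(H))$ forces degeneracy'' is exactly the missing content. The two failing ingredients are these.
\begin{enumerate}
\item Getting the quintic from the cubic hypersurface cannot work: that cubic contains $X$, so it pulls back to zero on the plane.
\item Deducing the common tangents from $Q_4\cdot Q_5=0$ is circular. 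That number is computed for classes that already pass through the $t_i$. On $X$ the two strict transforms have intersection number $4$, which does not say where they meet.
\end{enumerate}

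\textbf{One way to close it.} Note that $4L-\sum E_{p_i}-\sum E_{q_j}=L-K_X$. Take a smooth hyperplane section $H$. Since $p_g=q=0$, you get $H^0(\sO_H(K_X))\cong H^1(\sO_X(K_X-H))\cong H^1(\sO_X(H))^\vee\neq 0$, so $K_X|_H\sim a+b$ is effective. Then $(L-K_X)|_H\sim L|_H-a-b$ is effective because $h^0(\sO_H(L))\ge 3$. Since $H^1(\sO_X(L-K_X-H))=H^1(\sO_X(-2L+\sum E_{p_i}))=0$, this section lifts to a curve $C\in|L-K_X|$ with $H\cdot C=4$ and $p_a(C)=3$.

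Assume, as the statement requires, that $C$ is the strict transform of a quartic smooth at the sixteen points. Then the quintic comes from the paper's argument read backwards, with no need for $Y$. The class $H-C=2L-\sum E_{p_i}$ has $h^1=h^2=0$, so $h^1(\sO_C(H))=h^1(\sO_X(H))=1$. Hence $\sO_C(H)\cong\omega_C\cong\sO_C(K_X+C)=\sO_C(L)$, that is, $2\sum p_i+\sum q_j\in|\sO_C(5)|$. Plane curves are projectively normal, so this divisor is cut out by a quintic meeting $C$ doubly at each $p_i$. Keep in mind that the paper itself only sketches sufficiency and attributes necessity to Ellingsrud and Peskine.

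\textbf{Step~3 (sufficiency) takes a different route from the paper.} The paper restricts $\sO_X(H)$ to the quartic $C$ and gets $h^1(\sO_X(H))=h^1(\sO_C(H))=1$ from $\sO_C(H)\cong\sO_C(6L-2\sum p_i-\sum q_j)\cong\sO_C(L)=\omega_C$. This is an equality, so $h^0(\sO_X(H))=5$ follows directly.

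Your disjointness-plus-duality argument on $Y$ can be made to work, but not as ``Step~1 run backwards'', because $Q_4+Q_5\notin|\sigma^*H-K_Y|$. In fact $K_Y+Q_4+Q_5=\sigma^*H+\sum\tilde E_{p_i}$, where $\tilde E_{p_i}$ is the strict transform of $E_{p_i}$, a $(-2)$-curve. Duality then gives $h^1(\sO_Y(\sigma^*H+\sum\tilde E_{p_i}))=h^0(\sO_{Q_4\sqcup Q_5})-1=1$. Since $(\sigma^*H+\tilde E_{p_i})\cdot\tilde E_{p_i}=0$, restricting to the $\tilde E_{p_i}$ shows that $H^1(\sO_X(H))=H^1(\sO_Y(\sigma^*H))$ surjects onto this group, so $h^1(\sO_X(H))\ge 1$.

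This is only an inequality. So $h^0=5$ must also come from your computed example together with semicontinuity over the (irreducible) space of configurations. Very ampleness is deferred in both treatments.
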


\begin{proof} Since we are primarily interested in computing examples, we only sketch why such configuration of points is sufficient to obtain a divisor $H$ with $h^1(\sO_X(H))=1$ and hence
$h^0(\sO_X(H))=5$.   
Consider the strict transform $C$ on $X$ of the plane quartic curve through the sixteen points $q_i$ and $p_i$, and the short exact sequence of sheaves
\[
0\to \cO_X(H-C)\to \cO_X(H)\to \cO_C(H)\to 0.
\]
The divisor $H-C$ on $X$ is represented by the strict transforms of the conics in the plane that pass through $p_1,\ldots,p_4$.  Therefore $h^1(\cO_X(H-C)(=h^2(\cO_X(H-C))=0$, so from the long exact sequence of cohomology we deduce that $h^1(\cO_X(H))=h^1(\cO_C(H))$. Hence 
$h^1(\cO_X(H))=1$ if and only if $h^1(\cO_C(H))=1$. But $C$ is a plane quartic curve and $\cO_C(H)$ has degree four, so $h^1(\cO_C(H))=1$ if and only if $\cO_C(H)=\omega_C=\cO_C(L)$, the canonical bundle on $C$, where $L$ is the pullback to $X$ of a line in $\Pn^2$.  Therefore $X$ has speciality $s=1$ if and only if $\cO_C(H)=\omega_C$. 

On the other hand, the existence of the quintic curve $D$ proves that 
\[
\cO_C(D-(\sum 2p_i+\sum q_i))=\cO_C(5L-(\sum 2p_i+\sum q_i))=\cO_C,
\]
so 
\[
\cO_C(H)=\cO_C(6L-(\sum 2p_i+\sum q_i))=\cO_C(L)=\omega_C. 
\]

For the proof of the very ampleness of $|H|$ we refer the reader to \cite{CH97}. 
Computationally, it is straightforward to give examples of smooth surfaces with this choice of points in $ \PP^2$. 
\end{proof}

\end{example}
The following is a slightly more involved example. It is the case of rational surfaces of degree $10$ and genus $9$ with one $6$-secant line.  For a full account and proofs see \cite{Ra88}.
\begin{example}\label{second special surface}
Consider a rational surface $X$ of degree $10$ and genus $9$ with one $6$-secant line.  Then $N_6(X)=7$, so $X$ contains six $(-1)$-lines.  By adjunction one deduces that the surface image of the second iterated adjunction may be a Veronese surface in $\Pn^5$. 

If so, let $$\pi:X\to \Pn^2,$$ be the composition of the two first adjunction mappings.  This map is then the blow up of twelve points $p_1,...,p_{12}$ and  six points $q_1,...,q_6,$ and 
$X=\PP^2(8;2^{12},1^6)\subset\Pn^4$.

In this case  $|H|$ has speciality $s=2$, so what is the position of the points $p_i$ and $q_j$?   

\begin{proposition} Let  $$X=\PP^2(8;2^{12},1^6)\subset \Pn^4$$  be  a smooth surface, and let $p_1,\ldots ,p_{12}, q_1,\ldots ,q_{6}\subset \Pn^2$ be the points blown up to obtain $X$. 
Let $\pi_1:X_1\to\Pn^2$ be the blow-up of $\PP^2$ at the points $p_1,\ldots ,p_{12}$, and let $q_i, i=1,\ldots ,6$, denote their preimages on $X_1$; i.e., the surface $X$ is the blow-up of $X_1$ at these $q_i$.
Assume that the points $p_i$ are sufficiently general so that the linear system $|C|=|4\pi^*L-\sum_{i=1}^{12} E_i|$ on $X_1$ defines a $(4:1)$ map $$\rho: X_1\to \Pn^2.$$ 
Then there is a line $L_0\subset \Pn^2$ with preimage $L_1= \pi_1^{-1}L_0$  such that $\rho(L_1)$ is a plane quartic with three nodes. The six points $q_i$ together with six points on $L_1$ form the twelve points on $X_1$ that are mapped by $\rho$ to the three nodes of $\rho(L_1)$.

Conversely, a general such choice of twelve points $p_i$ and a line $L_0$  determines six points $q_i$ that yield a smooth surface.  
\end{proposition}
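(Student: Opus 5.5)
The plan is to mimic the proof for $\PP^2(6;2^4,1^{12})$: reduce the speciality condition $h^1(\sO_X(H))=2$ to a statement about how many conditions the six points $q_j$ impose on a linear system on $X_1$, and then read off that statement through the finite map $\rho$.

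\textbf{Setup.} On $X_1$ write $C=4\pi_1^*L-\sum_{i=1}^{12}E_i$. Then $C^2=4$ and $h^0(\sO_{X_1}(C))=15-12=3$, which is consistent with $\rho$ being $4:1$ onto $\PP^2$. The hyperplane class of $X$ is $H=2C-\sum_j F_j$, where the $F_j$ are the exceptional curves over the $q_j$. The system $|2C|$ on $X_1$ is non-special with $h^0=45-36=9$, provided the $p_i$ are general. The expected value of $h^0(\sO_X(H))$ is $9-6=3$. Hence $s=2$, equivalently $h^0(\sO_X(H))=5$, holds if and only if the six points $q_j$ impose only $4$ conditions on $|2C|$.

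\textbf{Decomposing $|2C|$ via $\rho$.} Write $\rho_*\sO_{X_1}=\sO_{\PP^2}\oplus\sE$ with $\sE$ of rank $3$. Then
\[
H^0(\sO_{X_1}(2C))=\rho^*H^0(\sO_{\PP^2}(2))\oplus H^0(\sE(2)),
\]
which splits $9=6+3$. The six pulled-back conics see the $q_j$ only through their images $\rho(q_j)$. If the $q_j$ lie, two by two, in the fibres over three points $n_1,n_2,n_3$, then the conics through $n_1,n_2,n_3$ already give three independent sections vanishing at all $q_j$. It remains to produce two further independent sections of $2C$ through the $q_j$ (or one, with the count then done on the $\sE(2)$-part), and to show they are not pulled back from $\PP^2$.

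\textbf{Role of the line $L_0$.} Since $C\cdot L_1=4$ and $L_1\cong\PP^1$, the map $\rho|_{L_1}$ is birational onto a rational plane quartic. Such a quartic generically has exactly three nodes $n_1,n_2,n_3$. Each fibre $\rho^{-1}(n_k)$ consists of four points: two lie on $L_1$, and the statement takes the other two to be $q$'s. The extra sections should come from reducible members of the form $L_1+R$ with $R\in|2C-L_1|=|7\pi_1^*L-2\sum E_i|$, or from the trace of $H^0(\sE(2))$ on $\rho^{-1}(\text{nodes})$. The key claim is this: the $12$ points of $\rho^{-1}\{n_1,n_2,n_3\}$ impose only $12-3-\cdots$ conditions on $|2C|$, in such a way that the six off-$L_1$ points impose exactly $4$. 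I would prove this by restricting $|2C|$ to $L_1$, where it gives $\sO_{\PP^1}(8)$, and using the exact sequence
\[
0\to\sO_{X_1}(2C-L_1)\to\sO_{X_1}(2C)\to\sO_{L_1}(8)\to0,
\]
together with the fact that the node pairs on $L_1$ are identified under $\rho$.

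For the forward direction, start from a smooth $X$ with $s=2$. The $q_j$ then impose only $4$ conditions, and I would argue, via the decomposition above and the Cayley--Bacharach type behaviour of fibres of $\rho$, that this forces the $q_j$ to pair up in fibres over three points. These three points must then be the nodes of the image of some line. This is the step I expect to be hardest. The approach I would try first is to use the minimal complex of Theorem~\ref{minimal complex}: the locus of $(q_1,\dots,q_6)$ where the rank drops has codimension at most $10$ (Proposition~\ref{codimOfSpecialCollectionOfPoints}), and I would compare this with the dimension of the family of lines $L_0$ plus the choice of node pairs, to see that the constructed locus is a component.

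\textbf{Converse.} Semicontinuity reduces the converse to exhibiting one example. A random choice of $p_i$ and $L_0$ over a finite field, checked in Macaulay2, verifies $h^0(\sO_X(H))=5$, smoothness and degree $10$. Openness of these conditions then gives the statement for a general choice.
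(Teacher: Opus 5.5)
Your reduction is correct: since $h^0(\sO_{X_1}(2C))=9$, everything comes down to showing that the six points $q_j$ impose only $4$ conditions on $|2C|$. However, that is exactly the step you leave open (the count ``$12-3-\cdots$''), and the mechanisms you suggest do not deliver it. The splitting $\rho_*\sO_{X_1}=\sO_{\PP^2}\oplus\sE$ only gives the three pulled-back conics through $n_1,n_2,n_3$, and the two further sections are never produced. Members of the form $L_1+R$ do not exist. Indeed, $|2C-L_1|=|7\pi_1^*L-2\sum E_i|$ consists of septics with $12$ assigned double points, i.e.\ $36$ conditions on a $36$-dimensional space, and for general $p_i$ (which your converse needs) it is empty. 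Hence $H^0(\sO_{X_1}(2C))\to H^0(\sO_{L_1}(8))$ is an injection between $9$-dimensional spaces, so an isomorphism. The sequence along $L_1$ therefore only says that a section is determined by its restriction to $L_1$. It does not relate that restriction to the values at the $q_j$, which lie off $L_1$; the node identifications on $L_1$ just characterize the $6$-dimensional subspace of pulled-back conics. The same gap undermines your converse. Semicontinuity gives $h^0(\sO_X(H))\le 5$ near a computed example, but $\{h^0\ge 5\}$ is closed. So one example does not propagate to a general $(p_i,L_0)$ unless the lower bound is already known for every member of the family.

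The missing idea, which is the paper's argument, is to use the three curves in $|C|$ that are preimages under $\rho$ of the lines through two of the nodes. Such a curve $\overline C$ has genus $3$. Since $K_{X_1}+C\sim\pi_1^*L\sim L_1$, adjunction gives $\omega_{\overline C}\cong\sO_{\overline C}(L_1)$, the divisor of the four points of $L_1$ over those two nodes. The other two such curves meet $\overline C$ exactly in the two node fibres, so $\sO_{\overline C}(2C)$ is the line bundle of these eight points. Removing the four $q$'s on $\overline C$ leaves $\omega_{\overline C}$, which has $3$ sections instead of the expected $2$. Because $h^1(\sO_{X_1}(C))=0$, the restriction $H^0(\sO_{X_1}(2C))\to H^0(\sO_{\overline C}(2C))$ is surjective, so these four $q$'s impose only $3$ conditions on $|2C|$. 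Doing this for two of the curves gives two $6$-dimensional subspaces of the $7$-dimensional space of sections through their common pair of $q$'s. Their intersection, the sections through all six $q_j$, has dimension at least $5$; using all three curves, the paper gets exactly $5$.

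For necessity, Proposition~\ref{codimOfSpecialCollectionOfPoints} only bounds the dimensions of components of the special locus from below. It therefore cannot show that every smooth $X$ arises from a line $L_0$, nor, without a tangent-space computation, that your locus is even a component. Your Cayley--Bacharach pairing claim is also unsupported. The paper instead extracts $L_0$ from the unique $6$-secant line: its six points on $X$ lie over a line in $\PP^2$ and map in pairs to the nodes of $\rho(L_1)$.
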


\begin{proof} 
In this proof, we only address how the choice of these points leads to the conditions $h^1(\sO_X(H))=2$ and $h^0(\sO_X(H))=5$. 
The key to the choice of points is that the strict transforms on $X$ of the preimages under $\rho$ of the three lines through two nodes of $\rho(L_1)$ are embedded as canonical curves, i.e., as plane quartic curves $C_1, C_2, C_3$ on $X\subset \PP^4$. The three planes of these curves share a line, which is
the unique $6$-secant line to $X$. The six points in $X$ on this line in $\Pn^4$ all lie in the preimage of the line $L_0\subset \Pn^2$ under $\pi$ and map in pairs to the nodes of $\rho(L_1)$.
The speciality $s=2$ of $|H|$, i.e. that $h^0(\cO_{X}(H))=5$, follows from the existence  of the special line $L_0$ and these three plane quartic curves. 
We now give a more detailed argument for these facts.  

The linear system $|H|$ on $X$ and the linear subsystem on $X_1$ of curves in $|2C|$ passing through the points $q_1,\ldots, q_6$ are in one-to-one correspondence.
Thus, we need to show that this linear system is $4$-dimensional. Since $h^0(\cO_{X_1}(2C))=9$, it is equivalent to show that the subspace $U\subset H^0(\cO_{X_1}(2C))$ of sections vanishing at $q_1,\ldots,q_6$ has codimension $4$. 

Consider $\overline{C}_i, i=1,2,3$, the preimage under $\rho$ in $X_1$ of the lines through two of the three nodes of $\rho(L_1)$. It is the isomorphic image of $C_i\subset X$ in $X_1$.  Let $q'_3+q'_4$ and $q'_5+q'_6$  be the preimages in $L_1$ of two of the nodes. 
Then $q_3+q_4+q'_3+q'_4$ and $q_5+q_6+q'_5+q'_6$ are the preimages in $\overline{C}_1$ of the two nodes, and   $L_1\cap\overline{C}_1=q'_3+q'_4+q'_5+q'_6$. Similarly, $q_3+q_4+q'_3+q'_4$ and $q_1+q_2+q'_1+q'_2$ are the preimages in $\overline{C}_2$, and $q_5+q_6+q'_5+q'_6$ and $q_1+q_2+q'_1+q'_2$ are the preimages in $\overline{C}_3$ of a pair of nodes on $\rho(L_1)$.  So, 
\[\overline{C}_1\cap \overline{C}_2=q_3+q_4+q'_3+q'_4\quad {\rm and}\quad\overline{C}_1\cap \overline{C}_3=q_5+q_6+q'_5+q'_6. \]
By translating 
from $C_1$ to $\overline{C}_1$, we have:
\[\omega_{\overline{C}_1}=\cO_{\overline{C}_1}(L_1)=\cO_{\overline{C}_1}(q'_3+q'_4+q'_5+q'_6)=\cO_{C_1}(q'_3+q'_4+q'_5+q'_6)=\omega_{C_1}.\]
Furthermore, 
\[\cO_{\overline{C}_1}(2C-(q_3+q_4+q_5+q_6))=\cO_{\overline{C}_1}(\overline{C}_2+\overline{C}_3-(q_3+q_4+q_5+q_6))=\cO_{\overline{C}_1}(q'_3+q'_4+q'_5+q'_6),
\]
where the first equality follows by adjunction, so we obtain
\[
h^0(\cO_{\overline{C}_1}(2C-(q_3+q_4+q_5+q_6)))=h^0(\omega_{\overline{C}_1})=3.
\]
In particular, $H^0(\cO_{\overline{C}_1}(2C-(q_3+q_4+q_5+q_6)))$ has codimension $3$ in $H^0(\cO_{\overline{C}_1}(2C)).$

Consider the exact sequence of sheaves on $X_1$
\[
0\to\cO_{X_1}(C)\to \cO_{X_1}(2C)\to \cO_{\overline{C}_1}(2C)\to 0.
\]
Notice that $h^0(\cO_{X_1}(2C))=9$ and $h^0(\cO_{\overline{C}_1}(2C))=6.$
By assumption, $h^0(\cO_{X_1}(C))=3$,  so  $h^1(\cO_{X_1}(C))=0.$ Therefore, the map in cohomology $$\sigma:H^0(\cO_{X_1}(2C))\to H^0(\cO_{\overline{C}_1}(2C))$$ is surjective.  
Since the subspace $H^0(\cO_{\overline{C}_1}(2C-(q_3+q_4+q_5+q_6)))$ of $H^0(\cO_{\overline{C}_1}(2C))$ has  codimension $3$,  
so does the linear subspace $U_1$ of curves in $|2C|$ passing through the four points $q_3+q_4+q_5+q_6$, because the underling vector space of $U_1$ is the preimage of $H^0(\cO_{\overline{C}_1}(2C-(q_3+q_4+q_5+q_6)))$ under $\sigma$. 

By a similar argument for $C_2$ and $C_3$, we conclude that the linear subspaces $U_2$ and $U_3$ of curves in $|2C|$ passing through $q_1+q_2+q_5+q_6$ and $q_1+q_2+q_3+q_4$ respectively have codimension $3$.   

The linear subspace of curves in $|2C|$ passing through the two points $q_1+q_2$ has codimension $2$, and the same applies for $q_3+q_4$ and $q_5+q_5$. Therefore, the span of $U_2$ and $U_3$ has codimension at least $2$ in $|2C|$. Similarly, the linear space spanned by $U_1$ and $U_3$ and the span of $U_1$ and $U_2$ also have codimension at least $2$ in $|2C|$. This is possible only if the span of all three $U_1$, $U_2$, and $U_3$ has codimension $2$; this means either that it coincides with the span of any two of them or that $U_1\cap U_2\cap U_3$ has codimension $4$. However, the former would mean that any curve in each $U_i$ contains all six points $q_i$, and hence the $U_i$ coincide, which is absurd. Thus, $U_1\cap U_2\cap U_3$, the linear subspace of curves in $|2C|$ passing through $q_1+\ldots +q_6$, has codimension $4$ in $|2C|$, as claimed. 

Finally, let us justify that the curves $C_i$ are plane quartic curves on $X$, which implies that $\cO_{C_i}(H)=\omega_{C_i}$ for each $i$.

Recall that 
$\cO_{\overline{C}_1}(2C-(q_3+q_4+q_5+q_6))=\omega_{\overline{C}_1}$, from which it follows that $\cO_{C_1}(H)=\omega_{C_1}$.
Consider the exact sequence of sheaves on $X$
\[
0\to\cO_{X}(H-C_1)\to \cO_{X}(H)\to \cO_{C_1}(H)\to 0.
\]
Since $h^0(\cO_X(H))=5$ and since $|H-C_1|$ and the pencil of curves in $|C|$ passing through $q_1+q_2+q'_1+q'_2$ on $X_1$ are in one-to-one correspondence, we have $h^0(\cO_X(H-C_1)=2$, and hence the map in cohomology 
$H^0(\cO_X(H))\to H^0(\cO_{C_1}(H))$ is surjective.  Therefore, $C_1$, and similarly $C_2$ and $C_3$, are plane quartic curves on $X\subset\Pn^4$.

For necessity, a key point is to observe that the preimages of the six points on the 6-secant line in $\Pn^4$ lie on a line in $\Pn^2$. This gives the desired line $L_0$.
\end{proof}
Similarly to Example~\ref{fss}, the smooth surfaces $X \subset \PP^4$ from Example \ref{second special surface} allow for a construction with liaison. They are linked to reducible surfaces: The surface $X$ is contained in a pencil of quartic hypersurfaces, cf. \cite[Proposition 2.9]{Ra88}. By Bezout's theorem, the $6$-secant line $\ell$ and the planes of the three plane quartic curves $C_i,\, i=1,2,3,$ that also contain $\ell$ are contained in these hypersurfaces. The surface linked to $X$ in a $(4,4)$ complete intersection is therefore reducible: it contains the three planes $P_i$ of the curves $C_i$ and an additional fourth component, a surface $Y$ of degree $3$ that turns out to be a rational cubic scroll whose directrix is $\ell$. The surface $Y$ intersects each of the three planes $P_i$ scheme-theoretically only along $\ell$. It is straightforward to construct the union of three planes and a rational scroll $Y$ that satisfies this property. Thus, the surface $X$ appears via linkage in a general complete intersection of two quartics containing $Y$.
\end{example}

In our accompanying Macaulay2 package we actually neither use linear 
systems nor liaison for Examples \ref{fss} and \ref{second special surface}. Instead, we use a construction
with Tate resolution introduced in Section \ref{tate}.

As these examples indicate, determining the collection of base points for a linear system of curves on $\Pn^2$, that defines a rational map with a smooth image $X\subset\Pn^4$, requires a  case-by-case analysis as the degree exceeds $9$.  
  
\begin{remark}
If $X=\PP^2(a;a_1,..,a_k)\subset \Pn^4$ is not any of the exceptions in Theorem~\ref{adjunction mapping},  with the $a_i$ in decreasing order, then the adjoint linear system $|H+K|$ defines a smooth surface $X_1=\PP^2(a-3;a_1-1,..,a_k-1)\subset \Pn^{N_1}$, with $N_1=\pi-1$, that is nonspecial: it restricts to the complete canonical linear system on $H$, and hence by the Riemann-Roch theorem, $\chi({\mathcal O}_X(H+K))=\pi=h^0({\mathcal O}_X(H+K))$. So,  if $r$ $(-1)$-lines in $X$ are blown down by the adjunction mapping, then the $k-r$ points $p_1,\dots,p_{k-r}$ in $\Pn^2$ with multiplicities $a_i-1>0$ of the adjoint linear system impose independent conditions on curves of degree $a-3$, as is expected for a general choice of $k-r$ points. By Proposition \ref{codimOfSpecialCollectionOfPoints}, one expects  the codimension in the Hilbert scheme of points in $\Pn^2$ of the collection of $k$ points such that a linear system of type $\PP^2(a;a_1,..,a_k)$ has speciality $s$ is $5s$. So as soon as $2r<5s$, the points $p_1,\dots,p_{k-r}$ are in special position relative to $|H|$, even though, with multiplicities $a_i-1$, they impose independent conditions on curves of degree $a-3$.
\end{remark}

Given the degree $d$ and the sectional genus of a smooth rational surface $X$ in $\Pn^4$, one may iteratively apply the adjunction mappings to identify possible numerical types $\PP^2(a;a_1,..,a_k)$ for $X$, and then look for conditions on the $k$ points that are blown up to obtain a four-dimensional very ample linear system.

For degree $d=10$ and the possible sectional genera, only three of $23$ numerically possible linear systems are realized on smooth surfaces, c.f. \cite{Ra88}. Therefore, one was looking for different methods of constructions.

   \section{Hilbert-Burch}\label{Section: Hilbert Burch} 

An alternative approach to constructing smooth surfaces in $\Pn^4$ involves considering determinantal equations. This method starts with equations that set the minors of a matrix to zero, extended to the construction of a surface in $\Pn^4$ as the degeneracy locus of a map
\[
\phi\colon \sF\to \sG
\]
between vector bundles $\sF$ and $\sG$ of ranks $f$ and $f+1$, respectively. 

If the dependency locus of $\phi$ is a surface, then the Eagon-Nothcott  complex \cite[Thm A2.10]{Eis95} of $\phi$
\[0 \to \sF \to \sG \to \sO_{\PP^4}(c_1(\sG)-c_1(\sF))\to \sO_X(c_1(\sG)-c_1(\sF))\to 0\]
is exact. After twisting by $\sO_{\PP^4}(-c_1(\sG)+c_1(\sF))$, we may assume that
$c_1(\sF)=c_1(\sG),$ so that the above sequence induces a locally free resolution 
\begin{equation}\label{eq:lfr} 0 \to \sF \to \sG \to \sI_X\to 0\end{equation}
of the ideal sheaf of $X$, from which one can derive the invariants of $X$.

The simplest case is when $\sF$ and $\sG$ are direct sums of line bundles. The space of maps from $\sF$ to $\sG$ can be identified with the space of matrices with homogeneous entries, and a general map defines a smooth surface if the degrees of all the entries are positive. See \cite{Ell75,EP91} for a sharp statement. 

\begin{example}[Example \ref{Bordiga1} continued]\label{Bordiga} A Bordiga surface $X\subset \Pn^4$ is the rank two locus of a map 
\[\mathcal{O}^3_{\Pn^4}(-4)\to \mathcal{O}^4_{\Pn^4}(-3),\]
i.e., the rank two locus of a $3\times 4$-matrix $M_X$ of linear forms. The relation among the rows at a rank two point yields a point in $\Pn^2$, defining a map $\phi_X:X\to \Pn^2$.

The matrix $M_X$, viewed as a $3\times 4\times 5$ tensor, has a $3\times 5$ adjoint matrix $L_X$ with linear entries in $4$ variables and a $4\times 5$ adjoint matrix $N_X$ with linear entries in $3$ variables.  The matrix $L_X$ has rank $2$ at ten points in $\Pn^3$ that correspond to ten planes in $\Pn^4$ intersecting the Bordiga surface in a plane cubic curve.  These planes are defined by the generalized columns of $M_X$ with dependent entries. The matrix $N_X$ vanishes at the ten points in the plane, the images of ten exceptional lines in the Bordiga surface contracted to a point by $\phi_X$.  The rational parametrization $\PP^2 \dashrightarrow X \subset \PP^4$ is given by the five $4\times 4$-minors of $N_X$.
\end{example}

The vector bundles $\sF$ and $\sG$ are direct sums of line bundles
if and only if $X$ is arithmetically Cohen-Macaulay (aCM). However, in most cases,  the surface $X$ is not aCM. Consequently, if $X$ is non-aCM and obtained via a locally free resolution~(\ref{eq:lfr}), then $\sF$ or $
\sG$ (or both) is not a direct sum of line bundles, indicating at least one of them has nonzero intermediate cohomology groups by Horrocks' splitting criterion~\cite{Horrocks64}.

If $X$ is non-aCM, then some twist of its ideal sheaf has a nonzero first or second cohomology group. The degree to which $X$ is not aCM can be quantified by the modules $H^i_* (\sI_X) = \oplus_{k \in \ZZ} H^i(\sI_X(k))$, $i \in \{1,2\}$, called the Hartshorne-Rao modules of $X$. These modules are linked to the intermediate cohomology modules $H^i_*(\sG) = \oplus_{k \in \ZZ} H^i(\sG(k))$ and $H^i_*(\sF) = \oplus_{k \in \ZZ} H^i(\sF(k))$ of $\sG$ and $\sF$, respectively, if $\sI_X$ admits a locally free resolution~(\ref{eq:lfr}). We aim to explore a method to identify vector bundles $\sF$ and $\sG$ with $H^1_*(\sI) = H^1_*(\sG)$ and $H^2_*(\sI_X) = H^3_*(\sF)$ such that $X$ appears as the dependency locus of a map from $\sF$ to $\sG$.

Consider the standard short exact sequence for $\sI_X$.  
\[0 \to \sI_X \to \sO_{\PP^4} \to \sO_X \to 0.\]
From this sequence, one can derive the polynomial
$\chi(\sI_X(k))$ from the basic numerical invariants of our surface:
\[\chi(\sI_X(k)) = \binom{k+4}{4}-\chi(\sO_X(kH)) \in \QQ[k]
\]
is a polynomial of degree four with coefficients depending only on the basic numerical invariants of our surface. In fact, 
by Riemann-Roch,
\[\chi(\sO_X(kH))= \frac{1}{2}kH\cdot(kH-K_X) +\chi(\sO_X)=\binom{k+1}{2}d- k\pi+k+\chi(\sO_X), \]
where $d=\deg  X$ is the degree of $X$, $\pi=\frac{1}{2}H\cdot(H+K_X)+1$ denotes the sectional genus, and $\chi(\sO_X)=1-q+p_g.$%

If the ideal sheaf $\sI_X$ of $X$ has natural cohomology, i.e., $h^i(\sI(k))\not=0$ for at most one $i\in \{0,\ldots,4\}$ for all $k\ge -4$, then 
\[h^i(\sI_X(k))=(-1)^i \chi(\sI_X(k)),  \]
allowing us to determine a plausible cohomology table for $\sI_X$, i.e., the table with rows indicating the cohomology degree $i$, diagonals corresponding to the twists $k$ of $\sI_X$, and cells reflecting the dimension of the cohomology $H^i(\sI_X(k-i))$. 
\begin{example}[$d=10$, $\pi=8$, and $p_g=q=0$]
Consider a surface $X \subset \PP^4$ of degree $d=10$, sectional genus $\pi=8$, and  $q=p_g=0$, so $\chi(\sO_X)=1$.
A plausible cohomology table 
is
$$\begin{array}{c|ccccc ccccc}
        & -1 & 0 & 1 & 2 & 3 & 4 & 5 & 6 & 7  \\ \hline 
     4 & 1 & . & . & . & . & . & . & . & . \\
     3 & 89 & 52 & 25 & 8 & . & . & . & . & . \\
     2 & . & . & . & . & 1 & . & . & . & . \\
     1 & . & . & . & . & 2 & 5 & 3 & . & . \\
     0 & . & . & . & . & . & . & 10 & 41 & 98 \\
     \end{array}$$
Note that the entry 1 in the $(4,-1)$ cell comes from $h^4(\sO_{\PP^4}(-5))=1$.
\end{example}

The proposition below shows how to obtain vector bundles $\sF$ and $\sG$ such that $H^1_*(\sI_X)=H^1_*(\sG)$ and $H^2_*(\sI_X)=H^3_*(\sF)$.

\begin{proposition}[Syzygy bundles, e.g.\cite{DES93}] Let $M$ be a finite length graded module over the homogeneous coordinate ring $S=k[x_0,\ldots,x_n]$ of $\PP^n$.
Consider its minimal free resolution:
$$0 \leftarrow M \leftarrow F_0 \leftarrow F_1 \leftarrow \ldots \leftarrow 
F_{n+1} \leftarrow 0.$$
 Let $\sF_i=\syz_i M$ for $1\le i\le n-1$ denote the sheafification of the $i$-th syzygy module $\ker(F_i \to F_{i-1})$ of $M$. Then
the $i$-th cohomology module is $H^i_*(\sF_i)\cong M$ and all other intermediate cohomology of $\sF_i$ vanishes. Conversely,
if $\sE$ is a vector bundle on $\PP^n$ with intermediate cohomology $H^i_*(\sE)\cong M$ and
$H^j_*(\sE)=0$ for $j\not=i$, then $\sE \cong \sF_i \oplus \sL$ where $\sL$ is a direct sum of line bundles. 
\end{proposition}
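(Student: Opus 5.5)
We compute the cohomology of $\sF_i$ by splitting the resolution into short exact sequences, and then get the converse from Horrocks' splitting criterion \cite{Horrocks64}.

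For the first part, set $K_j=\ker(F_j\to F_{j-1})$ for $j\ge 1$ and $K_0=\ker(F_0\to M)$. The minimal free resolution breaks into short exact sequences of graded modules
\[0\to K_j\to F_j\to K_{j-1}\to 0,\qquad 0\to K_0\to F_0\to M\to 0 .\]
Since $M$ has finite length, its sheafification is $\widetilde M=0$. Hence $\widetilde K_0\cong\widetilde F_0$ is a direct sum of line bundles, and every $\sF_j=\widetilde K_j$ sits in a sheaf exact sequence $0\to\sF_j\to\widetilde F_j\to\sF_{j-1}\to 0$. We also use the standard comparison between modules and sheaf cohomology: $0\to H^0_{\mathfrak m}(N)\to N\to H^0_*(\widetilde N)\to H^1_{\mathfrak m}(N)\to 0$ and $H^i_*(\widetilde N)\cong H^{i+1}_{\mathfrak m}(N)$ for $i\ge1$. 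The plan is to compute the local cohomology of the $K_j$ by induction on $j$.

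Free modules have local cohomology only in degree $n+1$. So the long exact sequences give $H^{t}_{\mathfrak m}(K_j)\cong H^{t-1}_{\mathfrak m}(K_{j-1})$ for $t\le n$. Since $M$ is finite length, $H^0_{\mathfrak m}(M)=M$ and all its other local cohomology vanishes. This yields $H^{j+1}_{\mathfrak m}(K_j)\cong M$, with $H^t_{\mathfrak m}(K_j)=0$ for all other $t\le n$.

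Now take $1\le i\le n-1$. Then $j+1=i+1\le n$, so $H^i_*(\sF_i)=H^{i+1}_{\mathfrak m}(K_i)\cong M$. The remaining groups $H^t_*(\sF_i)=H^{t+1}_{\mathfrak m}(K_i)$ with $1\le t\le n-1$, $t\ne i$, vanish, because $t+1\le n$.

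The step that needs care is showing that $\sF_i$ is locally free. Over each point, localizing the resolution gives an exact sequence of free modules, since $M$ localizes to zero off the irrelevant ideal. Therefore each kernel is a projective, hence free, module, and so $\sF_i$ is a vector bundle.

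For the converse, let $\sE$ be a vector bundle whose only nonzero intermediate cohomology is $H^i_*(\sE)\cong M$, where $1\le i\le n-1$. We build an extension that kills $H^i_*$ and then apply Horrocks. An isomorphism $\alpha\colon H^i_*(\sF_i)\to H^i_*(\sE)$ should be realized by a morphism $\sF_i\to\sE$ or $\sE\to\sF_i$. We aim to lift $\alpha$ to a map $\psi\colon\sF_i\to\sE$ using the defining sequences $0\to\sF_j\to\widetilde F_j\to\sF_{j-1}\to0$ together with $H^t_*(\sE)=0$ for $t\ne i$.

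Concretely, we take the dual picture. Dualizing the sequences identifies $\operatorname{Ext}^i(\sF_{0},\sF_i)$-type classes, and we obtain the extensions
\[0\to\sF_i\to\widetilde F_i\to\cdots\to\widetilde F_1\to\sF_0\to0\]
as a Yoneda $i$-extension. It then suffices to show that $\Hom(\sF_i,\sE)\to\Hom_S(M,M)$, $\psi\mapsto H^i_*(\psi)$, is surjective. Chasing through the short exact sequences, this surjectivity follows from the vanishing of $H^t_*(\sE)$ for $t\ne i$, $1\le t\le n-1$; this diagram chase is the main obstacle.

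Having $\psi$ inducing $\alpha$, we choose a free module $L$ mapping onto $H^0_*$-generators so that $\Psi=(\psi,\phi)\colon\sF_i\oplus\widetilde L\to\sE$ is surjective on $H^0_*$. The kernel $\sK$ is then a bundle whose intermediate cohomology vanishes, so $\sK$ splits by Horrocks. Symmetrically, we can run the argument the other way to produce a map $\sE\to\sF_i$ inducing $\alpha^{-1}$. The composites $\sF_i\to\sE\to\sF_i$ and $\sE\to\sF_i\to\sE$ act as the identity on $H^i_*$, so modulo line-bundle summands they give a split injection of $\sF_i$ into $\sE$. The complement $\sL$ then has vanishing intermediate cohomology, and Horrocks shows $\sL$ is a sum of line bundles. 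Hence $\sE\cong\sF_i\oplus\sL$.
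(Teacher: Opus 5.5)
Your first half is correct and complete. The induction $H^t_{\mathfrak m}(K_j)\cong H^{t-j-1}_{\mathfrak m}(M)$ for $t\le n$ gives exactly the intermediate cohomology of $\sF_i$, and local freeness holds because the sheafified resolution is an exact complex of bundles. The paper gives no proof of this proposition; it cites \cite{DES93}. A standard converse passes to $E=H^0_*(\sE)$, which has depth $i+1$ and hence projective dimension $n-i$. Local duality then shows that the dual of its minimal free resolution is the start of a minimal free resolution of the graded dual of $M$, so uniqueness of minimal resolutions pins down $\sE^\vee$, hence $\sE$, up to line-bundle summands. Your converse, which builds $\psi\colon\sF_i\to\sE$ and $\chi\colon\sE\to\sF_i$ inducing $\alpha$ and $\alpha^{-1}$, is a legitimate alternative strategy. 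However, its decisive step is missing.

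The gap is your claim that, because the composites act as the identity on $H^i_*$, they give (modulo line bundles) a split injection $\sF_i\to\sE$. Nothing you prove implies that an endomorphism inducing the identity on $H^i_*$ is invertible. This is exactly where minimality of the resolution has to enter, and your argument never uses minimality. To see that it must: add a trivial complex $S(a)\xrightarrow{1}S(a)$ in homological degrees $i+1,i$. This replaces $\sF_i$ by $\sF_i\oplus\sO(a)$, and every step of your proposal goes through verbatim. Yet $\sE=\sF_i$ is not of the form $\sF_i\oplus\sO(a)\oplus\sL$ (compare ranks). Similarly, $\psi\chi$ is in general not an automorphism of $\sE$; only $\chi\psi$ can be.

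The missing lemma is that any $f\in\operatorname{End}(\sF_i)$ with $H^i_*(f)=\mathrm{id}$ is an automorphism. You can get it from the chase you postponed:
\begin{itemize}
\item Apply $\Hom(-,\sE)$ to $0\to\sF_j\to\widetilde F_j\to\sF_{j-1}\to0$, using $\operatorname{Ext}^k(\widetilde F_j,\sE)=\bigoplus_b H^k(\sE(b))=0$ for $1\le k\le i-1$. This shows that $\Hom(\sF_i,\sE)\to\Hom_S(M,H^i_*\sE)_0$ is surjective \emph{and} that its kernel consists of the maps factoring through $\sF_i\hookrightarrow\widetilde F_i$.
\item Take $\sE=\sF_i$. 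A map factoring through $\widetilde F_i$ sends $H^0_*(\sF_i)=K_i$ (depth $K_i\ge 2$ by your computation) into $\mathfrak m K_i$, because $K_i\subset\mathfrak m F_i$ by minimality.
\item Hence $f=\mathrm{id}+(\text{such a map})$ is surjective on $K_i$ by graded Nakayama, and therefore bijective.
\item Then $\sE=\psi(\sF_i)\oplus\ker\bigl((\chi\psi)^{-1}\chi\bigr)$. The complement has no intermediate cohomology, and Horrocks finishes.
\end{itemize}

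Two smaller points. Producing $\chi$ ``symmetrically'' really requires a dual chase along $0\to\widetilde F_{n+1}\to\cdots\to\widetilde F_{i+1}\to\sF_i\to0$, using Serre duality and the identification of $\operatorname{Ext}^{n+1}_S(M,S)$ with the graded dual of $M$ up to twist. Your paragraph constructing $\Psi$ and $\sK$ is correct but plays no role in the conclusion.
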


The first examples of syzygy bundles are the exterior powers $\Omega^i(i)$ of the cotangent bundle on $\Pn^n$ twisted by $i$.
\begin{example} From the tautological short exact sequence
\[0 \to U \to \sO_{\PP^n}^{n+1} \to \sO_{\PP^n}(1) \to 0\]
we see that $U \cong \syz_1 K(1)$ is the sheafification of the syzygy module
of the residue module $K=S/(x_0,\ldots,x_n)$ twisted by $1$. We also see
$U\cong \Omega^1(1)$ by comparing this sequence with the Euler sequence. More generally we have \[\Lambda^i U \cong \Omega^i(i) \cong \syz_i K(i).\]
\end{example}. 

\medskip
\noindent
{\bf Example 4.2} continued. We are led to  
take $\sG$ as the first syzygy sheaf of a module $M$ with Hilbert function $\{2,5,3,0,\ldots\}$ and  to take $\sF$ as the $3$-rd syzygy sheaf of
$K(-1)$.    

The module $M$ has a presentation 
\[ 0 \leftarrow M \leftarrow S(-2)^2\leftarrow S(-3)^5\oplus S(-4)^2 \]
defined by the concatenation of a $2\times 5$ matrix $m_{2\times 5}$ with linear entries and a $2\times 2$ matrix $m_{2\times 2}$ with quadratic entries. 
Since $\rank \sG=5$ and $\rank \sF=4$ we may not need line bundle summands.

The minors of a general $2\times 5$ matrix with linear entries vanish in $5$ points. After change of coordinates of $\PP^4$ we assume that these are the coordinate points of $\PP^4$ and that 
\[m_{2\times 5}= \begin{pmatrix}
    x_0 & x_1 & x_2 &x_3 & x_4 \\
    a_0x_0 & a_1x_1 &a_2x_2 &a_3x_4 & a_4x_4\\
\end{pmatrix}\]
for five distinct scalors $a_i\in K$. By applying an automorphism of $S(-3)^5\oplus S(-4)^2$, we may assume that
\[ m_{2\times 2} = \begin{pmatrix}
    0 & 0 \\
    q_1 & q_2
\end{pmatrix}\]
for quadrics $q_1, q_2$. These quadrics can be chosen to lie in the span of the squares $x_0^2,\ldots,x_4^2$, because the monomials
$x_ix_j$ for $i\not=j$ are in the annihilator of $\coker( m_{2\times 5})$.
The betti table of $\coker(m_{2\times 5}|m_{2\times 2})$ for a general presentation matrix is
 \[\begin{array}{rcccccc}
         & 0 & 1 & 2 & 3 & 4 & 5\\
      \text{total:} & 2 & 7 & 15 & 20 & 13 & 3\\
      2: & 2 & 5 & . & . & . & .\\
      3: & . & 2 & 15 & 10 & 3 & .\\
      4: & . & . & . & 10 & 10 & 3
      \end{array}\]
The Buchsbaum-Rim complex \cite[Thm A2.10]{Eis95} associated to $m_{2\times 5}$  is a subcomplex with betti table
\[ \begin{array}{rcccccc}
       & 0 & 1 & 2 & 3 & 4\\
      \text{total:} & 2 & 5 & 10 & 10 & 3\\
      2: & 2 & 5 & . & . & .\\
      3: & . & . & 10 & 10 & 3
      \end{array}\]
The vector bundle $\sF$ has a resolution
\[0 \leftarrow \sF \leftarrow \sO^{5}(-5) \leftarrow \sO(-6)  \leftarrow 0. \]
Compared with resolution of the vector bundle $\sG$ for a general choice of $m_{2\times 5}|m_{2\times 2}$
\[0 \leftarrow \sG \leftarrow \sO^{15}(-5) \leftarrow \sO^{10}(-6)\oplus \sO^{10}(-7) \leftarrow \sO^3(-7)\oplus \sO^{10}(-8) \leftarrow \sO^3(-9)\leftarrow 0,\]
we see that the image of a morphism $\varphi \in \Hom(\sF,\sG)$ is contained in the sheafified kernel $\ker(m_{2\times 5}\colon S^5(-3)\to S^2(-2))$.
Since this sheaf has only rank $3$, the homomorphism~$\varphi$ has rank $\le 3$ everywhere, so we do not obtain a surface. We have to do better; we need to find a $m_{2 \times 2}$ such that the cokernel of $(m_{2 \times 5} | m_{2 \times 2})$ has additional third syzygies in degree $6$.

In the diagram of a morphism $\varphi$, 
$$\xymatrix{ 
0 &\ar[l] \sG & \ar[l] \sO^{15}(-5)\oplus \sO^{k}(-6)&\ar[l] \sO^{10+k}(-6)\oplus \sO^{10}(-7)   \\
0 &\ar[l] \ar[u]^\varphi \sF & \ar[l]\ar[u] \sO^{5}(-5) & \ar[l] \ar[u] \sO(-6) 
}$$
we see that the vertical map from $\sO(-6)$ can only hit second syzygies of $\ker(m_{2\times 5})$ if $k=0$. To get $k>0$, consider syzygies of $\sG$ arising from $m_{2\times 2}$.

The additional five first syzygies of $(m_{2\times 5}|m_{2\times 2})$ can be explained as follows:
In the pencil of quadrics $sq_1+tq_2$, counted with multiplicities, there are precisely five quadrics that depend on at most four of the five squares $x_0^2,\ldots,x_4^2$. If $x_i^2$ is the missing square in $sq_1+tq_2$, then $x_i(sq_1+tq_2)$ lies in the ideal $(x_ix_j \mid i\not=j)$ and the vector 
\[ x_i \begin{pmatrix}
0\\ sq_1+tq_2 \\ 
\end{pmatrix}\]
is a quadratic linear combination of the columns of $m_{2\times 5}$, and hence it gives rise to a syzygy of $(m_{2\times 5}|m_{2\times 2})$.

If we choose quadrics $q_1=\sum_{i=0}^2 b_i x_i^2$ and $q_2=\sum_{i=2}^4 c_i x_i^2$
with non-zero scalars $b_0,b_1,b_2,c_2,c_3,c_4 \in K$, then additional syzygies arise 
\[\begin{array}{rcccccc}
         & 0 & 1 & 2 & 3 & 4 & 5\\
      \text{total:} & 2 & 7 & 15 & 20 & 13 & 3\\
      2: & 2 & 5 & . & . & . & .\\
      3: & . & 2 & 15 & 12 & 3 & .\\
      4: & . & . & 2 & 10 & 10 & 3
      \end{array}\]
and a general $\varphi \in \Hom(\sF,\sG)$ yields a smooth surface $X$ of degree $d=10$, sectional genus $\pi=8$, and $p_g=q=0$.

It turns out that $X$ is an Enriques surface blown up at $4$ points, and thus $K_X^2=-4$. The adjunction process
\[ X \to X_1 \subset \PP^7\]
blows down four $(-1)$-lines, and the resulting surface $X_1$ is a minimal Enriques surface. 

If we set $b_2=0$, then we obtain even more syzygies
\[\begin{array}{rcccccc}
         & 0 & 1 & 2 & 3 & 4 & 5\\
      \text{total:} & 2 & 7 & 15 & 20 & 13 & 3\\
      2: & 2 & 5 & . & . & . & .\\
      3: & . & 2 & 15 & 14 & 4 & .\\
      4: & . & . & 4 & 11 & 10 & 3
      \end{array}\]
In this case, we get a rational surface $X$. The adjunction process 
\[ X \to X_1 \to X_2 \to X_3 \subset \PP^5\]
blows down two $(-1)$-lines and one $(-1)$-conic. The final surface $X_3 \subset \PP^5$
is a conic bundle over $\PP^1$ with nine singular fibers. Thus, the self-intersection number of the canonical divisor is $K^2_X=8-9-1-2=-4$, as before.
\qed

It is interesting that the construction of the Enriques surface differs by an $\epsilon=b_2$ from that of the rational surface. For an $\epsilon\to 0$ and 
 $\varphi$ in the limit space $\Hom(\sF,\sG_\epsilon) \subset \Hom(\sF,\sG_0)$, the resulting surface $X_0$ lies in the intersection of the two components of the Hilbert scheme. 
 
 \begin{question} What are the singularities of $X_0$, and how is it a limit of both non-minimal Enriques surfaces and rational surfaces?
 \end{question}
 
\bigskip

\section{Random searches over finite fields}

If a parameter space $\MM$ (e.g., a Hilbert scheme or a moduli space) for interesting objects in algebraic geometry has low codimension $c$ in a unirational
variety $\GG$, then over a finite field $\FF_q$ with $q$ elements, we expect that the ratio
\[\frac{|\MM(\FF_q)|}{|\GG(\FF_q)|} \approx \frac{1}{q^c}.\]
Thus, if $q$ and $c$ are small and we can {\bf check fast} whether a randomly chosen point $p\in \GG(\FF_q)$ is {\bf not in $\MM(\FF_q)$}, we might be able to find a point in $\MM(\FF_q)$ via a Computer Algebra search within a reasonable time frame.

To our knowledge, this approach to constructing an interesting variety was first introduced in \cite{Sch96} for the following example.

\begin{example}[$d=11$, $\pi=10$, and $p_g=q=0$]\label{schreyer surfaces}
Let $X \subset \PP^4$ be a smooth surface of degree $d=11$, sectional genus $\pi=10$, and $p_g=q=0$. The double point formula shows that $K^2=-6$.     

If $X$ has natural cohomology, then it has the following cohomology table 

$$\begin{array}{c|ccccc ccccc}
        & -1 & 0 & 1 & 2 & 3 & 4 & 5 & 6 & 7  \\ \hline 
     -4 & 1 & . & . & . & . & . & . & . & . \\
     -3 & 103 & 61 & 30 & 10 & . & . & . & . & . \\
     -2 & . & . & . & . & 2 & . & . & . & . \\
     -1 & . & . & . & . & 1 & 5 & 5 & . & . \\
     0 & . & . & . & . & . & . & 5 & 32 & 84 \\
     \end{array}$$
  
This table shows that $H^1_*(\sI_X)$ has Hilbert function $(1,5,5,0, \ldots)$. Modules with presentation
\[0 \leftarrow M \leftarrow S(-2) \leftarrow S^{10}(-4)\]
have this Hilbert function. They form an open subset of the space of modules with the desired Hilbert function and correspond to points in the Grassmannian $\GG=\GG(10,H^0(\PP^4,\sO(2)))$.
We would like to take $\sG=\syz_1 M$  and $\sF=\syz_3(2K(1))$. 

If $M \in \GG$ is generic, then $\sG$ has a resolution
\[0 \leftarrow \sG \leftarrow\sO^{15}(-5)\oplus\sO^5(-6) 
\leftarrow\sO^{26}(-7)\leftarrow \sO^{20}(-8)\leftarrow\sO^5(-9)\leftarrow 0.\]
Since 
\[0 \leftarrow \sF \leftarrow\sO^{10}(-5)\leftarrow \sO^2(-6) \leftarrow 0\]
and there are no nonzero morphisms from the second term in the locally free resolution of $\sF$ to the second term in the locally free resolution of $\sG$, we have $\Hom(\sF,\sG)=0$.
Thus, we need to construct a module $M$ with Betti table
\[\begin{array}{rcccccc}
         & 0 & 1 & 2 & 3 & 4 & 5\\
      \text{total:} & 1 & 10 & 22 & 28 & 20 & 5\\
      2: & 1 & . & . & . & . & .\\
      3: & . & 10 & 15 & k & . & .\\
      4: & . & . & 5+k & 26 & 20 & 5
      \end{array}\]
with $k \geq 2$, which could lead to a diagram
$$\xymatrix{ 
0 &\ar[l] \sG & \ar[l] \sO^{15}(-5)\oplus \sO^{5+k}(-6) & \ar[l] \sO^k(-6)\oplus \sO^{26}(-7) \\
0 &\ar[l] \ar[u]^\varphi \sF & \ar[l]\ar[u] \sO^{10}(-5) & \ar[l] \ar[u] \sO^2(-6)
}$$

Consider 
\[\MM= \{M \in \GG\mid \dim \Tor^S_3(M,K)_6 \ge 2\}.\]
We know very little about $\MM$. It is a subscheme of expected codimension $14=7\cdot 2$. So, if we work with $\FF_3$, then we expect to find a point in $\MM(\FF_3)$ by testing about $3^{14}\approx 4,700,000$ cases.
Fortunately we can improve our search by forcing one second syzygy.

Consider the Koszul complex in our five variables, along with ten quadrics
obtained by composing a vector $\alpha$ of ten linear forms with the $10\times 10$ Koszul matrix $kos_3$:

$$\xymatrix{
S & & & &\\
\ar[u]^\alpha S^{10}(-1) & \ar[l]^{kos_3} \ar[lu]_\beta S^{10}(-2) & \ar[l] S^5(-3) & \ar[l] S(-4) & \ar[l] 0 \\
}
$$
For a general choice of $\alpha$, we obtain a module with Hilbert function $(1,5,5,0,\ldots)$ and syzygies
\[\begin{array}{rcccccc}
         & 0 & 1 & 2 & 3 & 4 & 5\\
      \text{total:} & 1 & 10 & 22 & 28 & 20 & 5\\
      2: & 1 & . & . & . & . & .\\
      3: & . & 10 & 15 & 1 & . & .\\
      4: & . & . & 6 & 26 & 20 & 5
      \end{array}\]
The image $\GG_1$ of this family of modules in the $50$-dimensional Grassmannian $\GG=\GG(10,15)$ has the expected codimension $6$. This is because the choice of $\alpha$ and $\alpha'$ with $\alpha-\alpha'= \gamma\circ kos_2$ yields the same $\beta$, where $kos_2$ is the second Koszul matrix and $\gamma$ is a $1\times 5$ matrix of scalars. This accounts for five parameters for the codimension. The sixth comes from the fact that the subspace $M \in \GG$ depends only $\beta$ up to a scalar.

Hence, the parameter space $\MM$ has the expected codimension $8$ in $\GG_1$ and we should find a point in $\MM(\FF_3)$ in about $3^8\approx 6,500$ trials. In the 90s such a search took several hours; today it takes less than a minute.

Our search yields nine families of surfaces of degree 11 and genus 10.
One family consists of Enriques surfaces blown up at 6 points; the other 8 families
contain rational surfaces. They differ in their adjunction behavior, see the documentation of 
the function examplesOfSchreyerSurface in our package NongeneralTypeSurfaceInP4.
In the table of surfaces below, the last entry lists the components of the scheme $R_5=(X_5:X)$ residual to $X$ in the intersection of quintic hypersurfaces containing $X$. Each component is listed with the dimension and degree of $R_5$ and the dimension and degree of $R_5\cap X$. These components are either $6$-secant lines or planes that intersect $X$ in quartic curves. 

\medskip
\hskip -.2 cm
\begin{tabular}{l|c|c}
surface $X \subset \PP^4$ & extra syzygies & Components of $(X_5:X)$\\
\hline
    $Y(H_{min};2^1,1^5)$ & $2$ &$5\times (1,1,(0,6))$\\
    $\PP^2(18;6^5,5^5,2^1,1^4)$ & $2$&$6\times (1,1,(0,6))$\\
   $\PP^2(15;5^5,4^4,3^2,2^1,1^3)$ & $2$ &$7\times (1,1,(0,6))$\\
   \hline
   $\PP^2(16;6^1,5^6,4^3,2^2,1^3)$ & $2$& $3\times (1,1,(0,6)),2\times (1,1,(0,5)),$\\
   &&$ (2,1,(1,4))$ \\
   \hline
   $\PP^2(15;5^6,4^2,3^2,2^3,1^2)$ & $3$ & $2\times (1,1,(0,6)), 2\times (2,1,(1,4))$\\
   $\PP^1\times\PP^1((9,9);4^8,3^1,2^3,1^2)$ & $3$ & $3\times (1,1,(0,6)), (2,1,(1,4))$\\
   $\PP^2(13;4^7,3^4,2^2,1^2)$ & $3$ & $5\times (1,1,(0,6)), (2,1,(1,4))$ \\
   $\PP^2(13;5^1,4^5,3^4,2^4,1^1)$ & $4$ & $2\times (1,1,(0,6)),3\times (2,1,(1,4))$\\
   $\PP^2(12;4^4,5^5,2^6)$ & $5$ & $5\times (2,1,(1,4))$\\
\end{tabular} 

\smallskip\noindent
The surface $Y$ in the first case denotes a minimal Enriques surface. \medskip

\noindent
{\bf Lift to Characteristic 0.}
It remains to lift these surfaces to characteristic $0$. For this we note that
the scheme $\MM_k$ of modules with $k$ extra syzygies is defined over the integers and check that the fiber of
$\MM_k \to \Spec \ZZ$ over $(3)\in \Spec \ZZ$ is smooth of expected codimension $14+\dim \GG(2,k)$ in $\GG$ at the given point $M \in \MM_k(\FF_3)$ by a deformation computation, see the documentation of the function tangentDimension in our package NongeneralTypeSurfacesInP4. Hence, we can find a linear subspace in the Pl\"ucker space $P \subset \PP(\Lambda^{5}(H^0(\PP^4,\sO(2))^*))$ defined over $\ZZ$ that intersects $\MM_k(\FF_3)$ transversally at our point $M$. A component of $\MM_k\cap P$ contains an open subset of $\Spec \sO_L$ for a number field $L$ and a prime $\mathfrak p \in \Spec \sO_L$ with residue field $\sO_L/\mathfrak p\cong \FF_3$, such that mod $\mathfrak p$ the family reduces to our given surface over $\FF_3$. 

The generic point of that component yields a smooth surface $X_L$ defined over the number field $L$. By \cite{Sch96}, the surface $X_L$ has the same adjunction behavior as the surface over the finite field. Note that the degree of the number field equals the degree of the component of $\MM_k$ containing $M$. Perhaps with some effort one could determine the degree of $L$, but bounding the discriminant of the  number field $L$ seems to be out of reach. 
\end{example}

\begin{example}[Rational surfaces of degree $d=11 $ and sectional genus $\pi=11 $,  from \cite{GvBEL05}]

Graf v Bothmer, Erdenberger, and Ludwig discovered a rational surface of degree $d=11$ and sectional genus $\pi=11$ by searching over $\FF_2$.
The surface is the image of $\PP^2$ blown-up at $20$ points $p_1,\ldots,p_{20}\in \PP^2$ embedded by the linear system $H=9L-3E_1-\sum_{i=2}^{15} 2E_i -\sum_{j=16}^{20} E_j$, i.e., by the rational map defined by forms of in $H^0(\PP^2,\sI_\Gamma(9))$, where 
\[\sI_\Gamma= \sI_{p_1}^3\cap \bigcap_{i=2}^{15} \sI_{p_i}^2\cap \bigcap_{j=16}^{20} \sI_{p_j}.\]

Since $\chi(\sI_\Gamma(9))= \binom{9+2}{2}-\binom{3+1}{2}-14\binom{2+1}{2}-5=2$, 
we need $\h^1(\sO_X(H))=3$ and the points must lie in special position. By Proposition \ref{codimOfSpecialCollectionOfPoints}, the expected codimension of these points in $\Hilb_{20}(\PP^2)$ is $3\cdot 5=15$.
Thus, one might expect to find a surface in about $2^{15}\approx 32,000$ trials by choosing a collections of $1$, $14$ and $5$ points at random over $\FF_2$.

The resulting surface has Betti table 

\[ \begin{array}{rccccc}
       & 0 & 1 & 2 & 3 & 4\\
      \text{total:} & 1 & 8 & 13 & 8 & 2\\
      0: & 1 & . & . & . & .\\
      1: & . & . & . & . & .\\
      2: & . & . & . & . & .\\
      3: & . & 1 & . & . & .\\
      4: & . & 5 & 4 & . & .\\
      5: & . & 2 & 9 & 8 & 2
      \end{array}  \]

\noindent
{\bf Lift to Characteristic 0.} We use liaison theory to show the existence of the aforementioned surface in characteristic $0$ 
(see \cite{GvBEL05} for a deformation-theoretic argument showing how to  lift to characteristic $0$). 

The surface has two $6$-secant lines $L_1$ and $L_2$ spanning a hyperplane $H$. The surface $Y$ of degree $9$ linked to $X$ in a general $(4,5)$-complete intersection is singular precisely along $L_1$ and $L_2$. The hyperplane section $H\cap X$ is reducible with a degree-$8$ component of type $(3,5)$ on a quadric surface $Q$. The two $6$-secant lines $L_1$ and $L_2$ lie on $Q$ and are $5$-secants to the $(3,5)$-curve. Thus, a codimension-one space among the quintic hypersurfaces containing $X$ also contains $Q$. Hence, one may choose the $(4,5)$-linkage so that $Y$ is the union of $Q$ and a surface of degree $7$. Further analysis shows that this surface of degree $7$ is the union of a plane $P$ and a Bordiga surface $B$ (cf. Example~\ref{Bordiga}). The plane $P$ intersects $X\cup Q$ in a sextic plane curve. The surfaces $P$ and $B$ intersect along a line and two points, where this line is a $(-2)$-line on the Bordiga surface $B$. Such a line  occurs as the strict transform of a line through three of the points in $\PP^2$ that are blown up on the Bordiga surface (cf. Example \ref{Bordiga}). Reversing this linkage from $Y=Q\cup P\cup B$ yields a unirational construction of $X$.
\end{example}

\section{Tate resolutions}\label{tate}

The next idea for constructing surfaces in $\PP^4$ comes from the theory of Tate resolutions \cite{EFS03}, a framework that converts the cohomology groups of a sheaf on a projective space into a doubly infinite exact sequence of graded free modules over an exterior algebra. Not only does the Tate resolution of a sheaf encode its cohomology groups and those of its twists but its truncation also yields ``Beilinson's monad,'' a complex that constructs the sheaf from fundamental vector bundles such as the cotangent bundle $\Omega^1(1)$ twisted by $1$ and its exterior powers $\Omega^i(i)$. This section reviews the basics of the theory of Tate resolutions and provides examples illustrating how use them to construct surfaces in $\PP^4$.

\begin{theorem}{\rm \cite[Theorem 4.1]{EFS03}} Let $\sE$ be a coherent sheaf on $\PP^n=\PP(W)$, let $V=W^*$ denote the dual space, and let $E=\Lambda V$ denote its exterior algebra. There exists a doubly infinite exact sequence \[\TT= \ldots \to T^d \to T^{d+1} \to \ldots\] of graded free modules over $E$ with terms
\[T^d= \sum_{i=0}^n \Hom_K(E,H^i(\PP^n,\sE(d-i))).\]
 \end{theorem}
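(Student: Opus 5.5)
The natural route is through the BGG correspondence. Let $S=\mathrm{Sym}(W)$ be the coordinate ring of $\PP(W)$. A graded $S$-module $N=\oplus_d N_d$ corresponds to a linear complex of free $E$-modules
\[ \mathbf R(N)\colon \cdots \to \Hom_K(E,N_d)\to \Hom_K(E,N_{d+1})\to\cdots,\]
where the differential is $\phi\mapsto \bigl(e\mapsto \sum_i x_i\,\phi(e_i\wedge e)\bigr)$, with $x_i,e_i$ dual bases of $W,V$. Here $\Hom_K(E,N_d)\cong \omega_E\otimes N_d$, and $\omega_E=\Hom_K(E,K)$ is free of rank one, since $E$ is Gorenstein and so injective $=$ projective.

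\textbf{Step 1: linearity above the regularity.} Take $N=\oplus_{d\ge r}H^0(\sE(d))$ with $r>\operatorname{reg}\sE$. The key fact to prove is that $\mathbf R(N)$ is exact except at its first term precisely when $N$ has a linear free resolution over $S$. One checks this by computing the homology of $\mathbf R(N)$ in degree $d+j$ of term $d$ as $\Tor^S_j(N,K)_{d+j}$, via the Koszul complex. Castelnuovo--Mumford regularity guarantees this linearity, and the same argument gives the vanishing of higher cohomology in these twists. So the part $T^{\ge r}$ is the exact complex $\mathbf R(N)$, and its terms are $\Hom_K(E,H^0(\sE(d)))$, as the theorem requires since $H^i(\sE(d-i))=0$ for $i>0$ and $d\ge r$.

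\textbf{Step 2: extend to the left.} Take a minimal free resolution of the kernel at the first term. Over $E$ this resolution can be continued infinitely to the left, again because projectives are injective over $E$. Splicing gives a doubly infinite exact minimal complex $\TT$. It is independent of $r$ up to isomorphism, because minimal free resolutions over $E$ are unique.

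\textbf{Step 3: identify the terms.} This is the main obstacle. Minimality gives
\[T^d\cong \oplus_j \omega_E(\,\cdot\,)\otimes \Tor\text{-type data},\]
and the linear strands of $\TT$ are the graded pieces. The tool is the reciprocity theorem of \cite{EFS03}: the linear strand of $\TT$ in degree $j$ is $\mathbf R\bigl(H^j_*(\sE)\bigr)$, or equivalently $\Hom_E(K,T^d)$ computes $\oplus_i H^i(\sE(d-i))$. I would prove this through Beilinson's theorem. Applying the functor $\Omega$, which sends $\omega_E(k)\mapsto\Omega^k(k)$, to the truncation $T^{\ge r}$ yields a complex on $\PP^n$ whose only homology is $\sE$. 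For the truncation $\TT^{<r}$, the dual of $\TT$ is the Tate resolution of the Serre-dual data. Combining these with Serre duality and a comparison of spectral sequences identifies $\Hom_E(K,T^d)_{\text{graded piece}}$ with $H^i(\sE(d-i))$ for each $i$. This bookkeeping of degrees and the comparison step are where I expect the difficulty to lie.
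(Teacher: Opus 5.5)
The paper does not prove this statement; it quotes \cite[Theorem 4.1]{EFS03}. Your Steps 1 and 2 are fine and are the standard construction of $\TT$: $T^{\ge r}=\mathbf R(N)$, and $T^{\le r-1}$ is the minimal free resolution of $P=\ker(T^r\to T^{r+1})$. However, the whole content of the theorem is the description of $T^d$ for $d<r$, and Step 3 does not supply it. For $d<r$ the term $T^d$ must contain $\Hom_K(E,H^0(\sE(d)))$, even though $H^0(\sE(d))$ is not part of your truncated module $N$, and it must also contain all the higher cohomology. Your sketch has no mechanism that produces these groups from $P$.

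Concretely, your claim that $\UU(T^{\ge r})$ has $\sE$ as its only homology is false. With the grading for which $\UU(\TT(\sE))$ is the Beilinson monad (as in the paper's examples), the summand $\Hom_K(E,H^0(\sE(d)))$ of $T^d$ is sent to $\Omega^{-d}(-d)\otimes H^0(\sE(d))$, which is $0$ for $d\ge 1$. So for $\sE=\sO_{\PP^n}$ and any $r>\operatorname{reg}\sE=0$ you get $\UU(T^{\ge r})=0$. The monad always needs summands from $T^{<r}$, which is exactly the unknown part.

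Your other inputs all concern that same part $T^{<r}$, so invoking them without independent proofs is circular. These are: Theorem~\ref{thm:Beilinson}, which is stated for $\TT(\sE)$; the claim that the linear strands are $\mathbf R(H^j_*(\sE))$, which is stronger than the statement itself; and the claim that $\Hom_K(\TT,K)$ is a Tate resolution of Serre-dual data, which for non-locally-free $\sE$ is a complex, not a sheaf. ``Comparison of spectral sequences'' does not name an argument. If you had an independent proof that $\UU$ of your complex is a monad for every twist $\sE(m)$, the terms would follow by twisting with $\sO(-j)$, $0\le j\le n$, and using Bott vanishing and minimality. But that monad statement is where all the work lies.

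What is missing is a bridge from the $E$-side back to $S$-modules.
\begin{enumerate}
\item By minimality, $T^{r-1-k}$ is determined by $\Tor^E_k(K,P)$.
\item Using the linear (Cartan) resolution of $K$ over $E$, the group $\Tor^E(K,P)^*\cong\operatorname{Ext}_E(K,P^*)$ is, degree by degree, the homology of the linear $S$-complex $\mathbf L(P^*)$.
\item By BGG reciprocity, your Step 1 (that $\mathbf R(N)$ is a linear injective resolution of $P$) says that $\mathbf L(P)$ is the linear free resolution of $N$.
\item Since $\mathbf L(P^*)\cong\Hom_S(\mathbf L(P),S)$ up to twist, its homology is $\operatorname{Ext}^\bullet_S(N,S)$.
\item Local duality gives $\operatorname{Ext}^{n+1-j}_S(N,S(-n-1))_{-d}\cong H^j_{\mathfrak m}(N)_d^*$.
\item Finally, $H^1_{\mathfrak m}(N)_d=H^0(\sE(d))$ for $d<r$ (this is where the missing $H^0$'s come from), and $H^{j+1}_{\mathfrak m}(N)_d=H^j(\sE(d))$ for $j\ge1$.
\end{enumerate}
After regrading, this yields exactly the summands $\Hom_K(E,H^i(\sE(d-i)))$ of $T^d$ for $d<r$. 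Replacing your Beilinson/Serre-duality sketch with this chain turns Step 3 into a proof.
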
   

Note that $\Lambda W= \omega_E=\Hom_K(E,K) \cong E(-n-1)$ is the free $E$-module generated by $\Lambda^{n+1} W$ with $E=\Lambda V$ acting via contraction.  Also, unlike rings and sheaves in previous sections we denote direct sums of $m$ copies of $E$ or $\Omega^i(i)$ by $mE$ and $m\Omega^i(i)$.

If $d \ge r$, where $r$ is the Castelnuovo-Mumford regularity of $\sE$, then the linear matrices with entries in $V \subset E$ defining the differentials in $\TT$ are the adjoints of the multiplication maps 
\[\mu_d\colon W\otimes H^0(\sE(d)) \to H^0(\sE(d+1)).\]
The fact that the compositions
\[W\otimes W \otimes H^0(\sE(d)) \to W \otimes H^0(\sE(d+1))\to H^0(\sE(d+2))\]
factor through $S_2W \otimes H^0(\sE(d)) \to H^0(\sE(d+2))$ implies that the adjoint matrices define a complex over $E$.

Thus, computing syzygies over the exterior algebra allows us to compute the cohomology groups of the coherent sheaf $\sE$. In particular, our cohomology table for the ideal sheaf~$\sI_X$ of a smooth surface $X \subset \PP^4$ coincides with the Betti table of the Tate resolution~$\TT(\sI_X)$.

One can recover Beilinson's monad \cites{OSS80,Bei78} for $\sE$ by applying the additive functor to $\TT(\sE)$
\[ \UU\colon \{\hbox{free graded $E$-modules}\} \to \{ \hbox{locally free sheaves on $\PP^n$}\} \] defined by
\[ \UU\colon \omega_E(i) \mapsto \Lambda^i U\]
and the contraction
\[\xymatrix{
 \omega_E(i) \ar@{|->}[r] \ar[d]^{\neg a} &\Lambda^i U \subset \Lambda^i W \otimes \sO\ar[d]^{\neg a}\\
 \omega_E(j) \ar@{|->}[r]  & \Lambda^j U \subset \Lambda^j W\otimes \sO \\}\]
by $a \in \Lambda^{i-j} V$, where $U\subset W\otimes \sO$ denotes the tautological rank-$n$ subbundle on~$\PP^n$ when identifying $\PP^n$ with $\GG(n,W)$.

\begin{theorem}[\cite{EFS03}]
\label{thm:Beilinson} 
Let $\sE$ be a sheaf on $\PP^n$. 
Then the complex $$\UU(\TT(\sE))$$ is a monad for $\sE$, i.e., 
$$H^i(\UU(\TT(\sE)))=0 \hbox{ for } i\not=0 \hbox{ and } H^0(\UU(\TT(\sE)))\cong \sE.$$
\end{theorem}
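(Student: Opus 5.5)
We prove Theorem~\ref{thm:Beilinson} by reducing to the case where $\TT(\sE)$ is replaced by a finite piece and by comparing with Beilinson's resolution of the diagonal. First we note that $\UU$ kills most of $\TT(\sE)$: since $\Lambda^i U=0$ for $i>n$ and for $i<0$, only the summands $\omega_E(i)$ with $0\le i\le n$ survive. The term $T^d$ contains $\Hom_K(E,H^j(\sE(d-j)))=\omega_E(d-j)\otimes H^j(\sE(d-j))$, up to the degree normalisation. So $\UU(T^d)=\bigoplus_j H^j(\sE(d-j))\otimes \Lambda^{d-j}U$, where $\Lambda^{d-j}U\cong\Omega^{d-j}(d-j)$. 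This is a finite complex with the shape of Beilinson's monad, whose $p$-th term is $\bigoplus_j H^j(\sE(p-j))\otimes\Omega^{p-j}(p-j)$.

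The plan is to prove the statement first for the sheaves $\sE=\sO(-a)$ with $0\le a\le n$, and then extend to all coherent sheaves by dévissage. For $\sE=\sO$ the Tate resolution can be written down explicitly. It is the complex $\ldots\to \omega_E\otimes H^0(\sO(d))\to\ldots$ glued to the dual part coming from $H^n(\sO(-n-1-k))$. Applying $\UU$ leaves only the term $\omega_E(0)\mapsto \Lambda^0U=\sO$ in degree $0$, which gives $\sO$. The twists $\sO(-a)$ behave in the same way: each contributes the single term $\Lambda^aU$ in the appropriate position, and the Koszul-type differentials become the Euler/Koszul maps between the $\Lambda^iU$. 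These can be checked directly to be exact away from the expected spot.

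For general $\sE$ we argue as follows. Take a resolution $0\to L_n\to\dots\to L_0\to\sE(m)\to0$ by sums of the sheaves $\sO(-a)$ with $0\le a\le n$; such a resolution exists by Beilinson or by Castelnuovo--Mumford regularity after twisting. Alternatively, use the functoriality of $\TT$. Short exact sequences of sheaves induce triangles of Tate resolutions, since $\TT$ is exact up to homotopy; this comes from the equivalence $D^b(\mathrm{coh}\,\PP^n)\simeq$ the stable category of $E$-modules in \cite{EFS03}. The functor $\UU$ is additive, so it preserves mapping cones. The property ``$\UU(\TT(\sE))$ is quasi-isomorphic to $\sE$'' is therefore closed under cones and shifts. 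Since it holds for the generators $\sO(-a)$, it holds for all coherent $\sE$.

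The main obstacle is making this compatibility precise. We must show that $\UU\circ\TT$ is a triangulated functor naturally isomorphic to the identity, and not merely objectwise correct on generators. For this we would identify $\UU$ with the functor $R\,p_{1*}(p_2^*(-)\otimes\sK)$ given by the Koszul kernel $\sK$ on $\PP^n\times\PP^n$, where $\bigoplus\Lambda^iU\boxtimes\sO(-i)$ resolves $\sO_\Delta$. Then $\UU(\TT(\sE))$ computes $Rp_{1*}(p_2^*\sE\otimes\sO_\Delta)=\sE$. Finally, minimality of $\TT$ together with the degree bookkeeping shows that the homology is concentrated in degree $0$.
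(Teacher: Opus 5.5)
The paper does not prove this theorem. It quotes \cite{EFS03} and points to \cite{EES15} for a more conceptual argument, so your sketch has to stand on its own. It has a genuine gap: the step that carries all the content is asserted, not proved.

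The d\'evissage cannot do the job. As you note yourself, objectwise isomorphisms $\UU(\TT(L))\simeq L$ on generators do not pass through cones without a natural transformation between $\UU\circ\TT$ and the identity. Your remedy, ``identify $\UU$ with $Rp_{1*}(p_2^*(-)\otimes\mathcal K)$'', does not make sense as stated, since $\UU$ acts on free $E$-modules and the Fourier--Mukai functor acts on sheaves. Read as a natural isomorphism $\UU\circ\TT\cong\Phi_{\mathcal K}$, it \emph{is} the theorem. Evaluating $Rp_{1*}(\mathcal K\otimes p_2^*\sE)$ naively only yields Beilinson's spectral sequence with $E_1$-terms $H^j(\sE(-i))\otimes\Lambda^iU$. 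The whole point is that, after replacing $R\Gamma(\sE(-i))$ by $H^\bullet(\sE(-i))$, the resulting differentials are exactly the (in general nonlinear) components of the Tate differentials. Once you have that identification, the d\'evissage is superfluous. Your closing appeal to minimality and degree bookkeeping also adds nothing, since a complex quasi-isomorphic to $\sE$ automatically has homology only in degree $0$.

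The reduction also contains concrete errors:
\begin{enumerate}
\item The correct indexing is $\UU(T^d)=\bigoplus_j H^j(\sE(d-j))\otimes\Lambda^{j-d}U$, not $\Lambda^{d-j}U$.
\item With this indexing, $\UU(\TT(\sO(-a)))$ for $1\le a\le n$ is the complex with terms $\Lambda^{n-e}U\otimes H^n(\sO(e-a-n))$ in degrees $0\le e\le a-1$. For $a=2$ this is $\sO(-1)^{n+1}\to T(-2)$, the Euler sequence twisted by $-2$. It is not the single term $\Lambda^aU\cong\Omega^a(a)\not\cong\sO(-a)$; the sheaves with one-term output are the $\Omega^a(a)$.
\item Proving the statement for $\sE(m)$ says nothing about $\sE$. $\UU$ only sees a fixed window of $\TT$, and twisting shifts $\TT$ relative to that window.
\item $\sE$ itself need not have such a resolution: $\sO(-n-1)$ receives no nonzero map from any $\sO(-a)$ with $0\le a\le n$.
\end{enumerate}

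The missing idea is a model of $\TT(\sE)$ on which $\UU$ \emph{literally} becomes relative pushforward of the Koszul kernel. Here is one way to supply it.
\begin{enumerate}
\item Apply the BGG functor $\mathbf R$ (terms $\omega_E(-d)\otimes N_d$, differential $\sum_k e_k\otimes x_k$) to the \v{C}ech complex $\check C^\bullet_*(\sE)=\bigoplus_d\check C^\bullet(\sE(d))$, and totalize to get $\mathbf T'$.
\item The strands of $\mathbf R(N)$ are graded pieces of the Koszul complex of $N$. On each \v{C}ech module some $x_k$ acts invertibly, so each of the finitely many columns is exact, and hence $\mathbf T'$ is exact.
\item The $E$-degree-$0$ part of the differential of $\mathbf T'$ is the \v{C}ech differential. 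Gaussian elimination (the perturbation series stops because $E$ is finite-dimensional) gives a homotopy equivalence to a minimal exact complex with terms $\omega_E\otimes H^j(\sE(\cdot))$.
\item In high degrees only $H^0$ survives and the differential is multiplication. By uniqueness of minimal free resolutions over $E$, this minimal complex is $\TT(\sE)$.
\item $\UU$ is additive, so it preserves homotopy equivalences.
\item $\UU(\mathbf T')$ has terms $\Lambda^iU\otimes\check C^p(\sE(-i))$, with \v{C}ech plus contraction differentials. This is exactly $p_{1*}$ of the relative \v{C}ech complex of $\mathcal K\otimes p_2^*\sE$ for the cover $\{\PP^n\times D_+(x_k)\}_k$.
\item Hence $\UU(\TT(\sE))\simeq Rp_{1*}(\mathcal K\otimes p_2^*\sE)\simeq Rp_{1*}\Delta_*\sE=\sE$, with no d\'evissage needed.
\end{enumerate}
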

\noindent
Note that $\UU(\TT(\sE))$ forms a bounded complex because
$\Lambda^i U \not=0$ only for $0 \le i \le n$.
The proof of this theorem, presented in \cite{EES15}, is more conceptual in a broader context.

\begin{remark}
A similar functor computes $Rp_*(q^*\sE)$ for the incidence
flag 
\[\xymatrix{\FF \ar[d]^q \ar[r]_p  & \GG(r,W)\\
\PP^n \\}
\]
when we replace $U$ with the universal rank $r$ subbundle on the Grassmannian $\GG(r,W)$ of  $r$-dimensional subspaces of $W$ (c.f. \cite{ESW03})). 

If $\GG(n,W)\cong \PP(W)=\PP^n$, then we have $\Lambda^i U \cong \Omega^i(i)$ and the Beilinson monad is usually expressed in terms of the $\Omega^i(i)'s$. 
\end{remark}

\begin{example}[A surface of degree $d=8$ sectional genus $\pi=5$, $p_g=0$, and $q=1$ \cite{ADS98}]
If a smooth surface $X$ of degree $8$, sectional genus $5$, geometric genus~$0$, and irregularity $1$ in $\PP^4$ exists, then it is known to be an elliptic conic bundle (see, e.g., \cite{Ok86}). We demonstrate how to use Beilinson monads to construct such a surface.  

Note that $X$ has natural cohomology (see \cite[Proposition 2.2]{ADS98}). Therefore, the Tate resolution of $\sI_X$ takes the following form: 
\[ \begin{array}{rccccc ccccc}
        & -1 & 0 & 1 & 2 & 3 & 4 & 5 & 6   \\ \hline        
     -4: & 1 & . & . & . & . & . & . & .  \\
     -3: & 64 & 36 & 16 & 4 & . & . & . & .  \\
     -2: & . & . & . & 1 & 1 & . & . & .  \\
     -1: & . & . & . & . & 1 & 1 & . & .  \\
     0: & . & . & . & . & . & 6 & 26 & 66  \\
     \end{array}\]
Applying Theorem~\ref{thm:Beilinson} to $\sI_X(3)$ shows that $\sI_X(3)$ should arise as the cohomology of a monad 
\[
0 \rightarrow 4 \sO(-1) \oplus \Omega^3(3) \rightarrow \Omega^2(2) \oplus \Omega^1(1) \rightarrow \sO \rightarrow 0, 
\]
from which $\sI_X(3)$ can be interpreted as the degeneracy locus of a map from $4\sO(-1)$ to $\sG$ 
\[
0 \rightarrow 4\sO(-1) \rightarrow \sG \rightarrow \sI_X (3) \rightarrow 0, 
\]
where $\sG$ is a rank-$5$ torsion-free sheaf obtained as the cohomology of a monad
\[
0 \rightarrow \Omega^3(3) \rightarrow \Omega^2(2) \oplus \Omega^1(1) \rightarrow \sO \rightarrow 0.
\]
Since the adjoint line bundle $\sO_X(K_X+H)$ of $X$ is globally generated (see, e.g., \cite{Sommese79}), the torsion-free sheaf $\sG$ is actually locally free. This means that  the first map is injective and the second map is surjective as bundle maps. 

In~\cite{ADS98}, Decker, Sasakura, and the first author showed that if $\alpha \in V$ and $\beta \in \Lambda^2 V$ are generic, then the vertical concatenation of $\alpha$ and $\beta$ and the horizontal concatenation of $-\beta$ and $\alpha$ yield the required first and second maps of the monad. This construction results in the desired rank-$5$ vector bundle. 
The same paper further showed that the Betti table of the surface takes the following form: 
\[ \begin{array}{rccccc}
       & 0 & 1 & 2 & 3 & 4\\
      \text{total:} & 1 & 6 & 9 & 5 & 1\\
      0: & 1 & . & . & . & .\\
      1: & . & . & . & . & .\\
      2: & . & . & . & . & .\\
      3: & . & 6 & 4 & 1 & .\\
      4: & . & . & 5 & 4 & 1\\
      \end{array}  \]
\end{example}
   
\begin{example}[Rational surfaces of degree $12$ and sectional genus $13$]\label{AboRanestad}
A smooth rational surface $X$ of degree $12$ and sectional genus $13$ has a plausible Tate resolution for $\sI_X$ with shape
\[ \begin{array}{rccccc ccccc}
        & -1 & 0 & 1 & 2 & 3 & 4 & 5 & 6 & 7  \\ \hline        
     -4: & 1 & . & . & . & . & . & . & . &. \\
     -3: & 121 & 73 & 37 & 13 & . & . & . &. &.. \\
     -2: & . & . & . & . & 4 & 2 & . & .&.  \\
     -1: & . & . & . & . &  & 2 & 3 & .&.  \\
     0: & . & . & . & . & . & . & 5 & 29 &77  \\
     \end{array}\]
So, the Beilinson monad of $\sI_X(4)$ has shape
\[
   0\to4\Omega^3(3)\to 2\Omega^2(2)\oplus 2\Omega^1(1)\to 3\mathcal{O}\to 0, 
   \]
and hence the surface $X$ is completely determined by $m_{(2+2)\times 4 }:4E(1) \to 2E(2)\oplus 2E(3)$ and
$m_{3\times (2+2)}: 2E(2)\oplus 2E(3) \to 3E(4)$.

To construct these differentials, we start with the maps of linear forms $m_{2\times 4}:4E(1) \to 2E(2)$ and  $m_{3\times 2}:2E(3) \to 3E(4)$, and ask whether they can be extended by quadratic parts to maps
$m_{(2+2)\times 4}$ and
$m_{3\times (2+2)}$
which compose to zero.
In other words, we ask whether the space of homomorphisms
\[{\rm HomSpace}:=\Hom_E(\coker m_{2\times 4},\coker m_{3\times 2})\]
is nonzero.  In fact, 
$m_{3\times (2+2)}\circ m_{(2+2)\times 4}=0$
if and only if there is a map $\rho$ in HomSpace such that the diagram
\[\xymatrix{
 4E(1) \ar[r]^{m_{2\times 4}} \ar[d]_{-m'_{2\times 4}}& 2E(2)
 \ar[d]^{m'_{3\times 2}}\ar[r] & \coker  m_{2\times 4} \ar[d]^{\rho} \\
 2E(3) \ar[r]_{m_{3\times 2}} &3E(4)\ar[r] & \coker  m_{3\times 2}\\
}\]
commutes.

To compute the HomSpace from given matrices $m_{2\times 4}$ and $ m_{3\times 2}$, we extend them by adding indeterminate quadratic entries to $m_{(2+2)\times 4}$ and $m_{3\times (2+2)}$. We then require that their composition $ m_{3\times (2+2)}\circ m_{(2+2)\times 4}$ vanish.  
Consider the diagram
\[\xymatrix{
 4E(1) \ar[r]^{m_{2\times 4}} \ar[d]_{-m'_{2\times 4}}& 2E(2)\ar[dl]^{m_{2\times 2}}  \ar[d]^{m'_{3\times 2}}\\
 2E(3) \ar[r]_{m_{3\times 2}} &3E(4)\\
},\]
where $m_{2\times 2}: 2E(2)\to 2E(3)$ is a map defined by a matrix of linear forms.
The $20$-dimensional space of maps $m'_{3\times 2}$ that factor as $m_{3\times 2}\circ{m_{2\times 2}}$ is equivalent to the $0$-map in the HomSpace. There is a $140$-dimensional space of indeterminate $2$-form entries in the vertical maps in the diagram, while $$m_{3\times (2+2)}\circ m_{(2+2)\times 4}=0$$ imposes $120$ equations on their coefficients.  The dimension of the HomSpace is therefore the corank of this space of $120$ equations. 

    Now we fix the bottom horizontal matrix $m_{3\times 2}$ to correspond to the Hilbert-Burch matrix of the cubic scroll, i.e., 
    $$m_{3\times 2}^t=\begin{pmatrix}
        e_0& e_1 &e_3 \cr
        e_1 & e_2 &e_4
    \end{pmatrix}.
    $$
 The dimension of the space spanned by the equations $ m_{3\times (2+2)}\circ m_{(2+2)\times 4}=0$ depends solely on the top horizontal matrix $m_{2\times 4}$.

The entries of $m_{2\times 4}$ and $m_{3\times 2}$ are linear forms. Thus, each column of $m_{2\times 4}$ and each row of $m_{3\times 2}$ define a line in $\check \PP^4$. So, the columns of $m_{2\times 4}$ form a $\PP^3$ of lines, while the rows of $m_{3\times 2}$ form a $\PP^2$ of lines. Both of these $\PP^3$ and $\PP^2$ are embedded in $\GG(2,5)$ via the Pl\"ucker map:  
$$
\PP^2 \to \GG(2,5) \leftarrow \PP^3.
$$
The intersection points of the two images play a crucial role.
For a general choice of a matrix $m_{2\times 4}$, there are no intersection points. 

The dimension of the HomSpace is at least $h$ if there are $h$ intersection points \cite[Lemma 3.4]{AR06}. Furthermore, if the number of intersection points is finite, then there are at most seven intersection points. 

We observed that
when $h\geq 3$, the general map in HomSpace produces maps  $m_{(2+2)\times 4}$ and $ m_{3\times (2+2)}$, such that the homology of the composition is the ideal sheaf of a smooth surface. 

The varieties of matrices $m_{2\times 4}$ with 3 or 4 intersection points in $\GG(2,5)$ are unirational. 
Examples with 5 or 6 intersection points can be found by searching over a 
finite field. A special case arises if the $2\times 4$ matrix has a $2\times 2$ submatrix 
which also depends only on $e_0,e_1$, and $e_2$. In this case, we have two conics in the plane spanned by $e_0,e_1$, and $e_2$ 
intersecting at four points.  Adding one or two more intersection points 
in the Grassmannian $\GG(2,5)$ yields additional unirational components. To obtain a seventh intersection point, further searching is necessary. 

We have found seven families of surfaces, four of which are unirational.
These families differ in their adjunction behavior and in the number of 6-secants.
In the table below, the last three cases correspond to the special cases.\\

\begin{table}[ht]
\centering
\begin{tabular}{l|c|c}
surface $X \subset \PP^4$ & $h$ & $(X_5:X)$\\
\hline
    $\PP^2(12;4^2,3^9,2^3,1^7)$ & $6$& $(1,1,(0,6))$\\
   $\PP^2(12;4^3,3^6,2^6,1^6)$ & $5$&$2\times (1,1,(0,6))$\\
   $\PP^2(12;4^4,3^3, 2^9,1^5)$ & $4$&$3\times (1,1,(0,6))$ \\
   $\PP^2(12;4^5,2^{12},1^4)$ & $3$ &$4\times (1,1,(0,6))$\\
   $\PP^2(13;5,4^3,3^8,2,1^8)$ & $7$ & $(1,2,(0,11))$\\
   $\PP^2(13;5,4^4,3^5, 2^4,1^7)$ & $6$& $(1,1,(0,6)),(1,2,(0,11))$ \\
   $\PP^2(13;5,4^5,3^2,2^{7},1^6)$ & $5$& $2\times (1,1,(0,6)),(1,2,(0,11))$\\
\end{tabular}
\end{table}
The rightmost column lists the components of the scheme $R_5=(X_5:X)$ residual to $X$ in the intersection of quintic hypersurfaces containing $X$. Each component is listed with  its dimension and degree, along with the dimension and degree of $R_5\cap X$. These components are either $6$-secant lines or $11$-secant conics to $X.$

Note that Le Barz's 6-secant formula holds: The number of (-1)-lines plus the number of 6-secant lines is always eight.

We observed that the number of (-1)-lines equals the number of intersection points $h$ plus one.
We also observed that the 11-secant conic lies in the special plane spanned by
$e_0,e_1$, and $e_2$.\\

\noindent
{\bf Lift to characterstic 0.} 
The condition that the two linear matrices $m_{3\times 2}$ and $m_{2\times 4}$ define maps whose images intersect in $h$ points is a transversality condition in the Grassmannian. Thus, we can find a number field $L$
and matrices $m_{3\times 2}$ and $m_{2\times 4}$ defined over an open subset of $\Spec \sO_L$ containing a prime $\mathfrak p$ with residue field 
$\sO_L/\mathfrak p \cong \ZZ/p $
which reduce modulo $\mathfrak p$
to our given matrices over the finite field.
Since the dimension of the HomSpace is at least $h$ with and equals in our examples over a finite field, the dimension of the HomSpace over the number field is $h$ as well, and our families lift to characteristic zero.
\end{example}
    
\begin{example}[$d=12$, $\pi=13$, $p_g=1$, and $q=0$]\label{Abo surfaces}
If a regular smooth surface $X$ of degree $12$, sectional genus $13$ and Euler characteristic $2$ has natural cohomology, then the Tate resolution for $\sI_X$ takes the following form: 
\[ \begin{array}{rccccc ccccc}
        & -1 & 0 & 1 & 2 & 3 & 4 & 5 & 6 & 7  \\ \hline        
     -4: & 1 & . & . & . & . & . & . & . &. \\
     -3: & 122 & 74 & 38 & 14 & 1 & . & . &. &.. \\
     -2: & . & . & . & . & 3& 1 & . & .&.  \\
     -1: & . & . & . & . &  & 3 & 4 & .&.  \\
     0: & . & . & . & . & . & . & 4 & 28 &76  \\
     \end{array}\] 
To understand the differentials that determine this resolution, we focus on the two maps of linear forms $m_{1\times 3}:3E(1) \to E(2)$
and $m_{4\times 3}:3E(3) \to 4E(4)$ and ask whether they can be extended with quadratic parts to form maps
$m_{(1+3)\times 3}:3E(1) \to E(2)\oplus 3E(3)$
and 
$m_{4\times (1+3)}: E(2)\oplus 3E(3) \to 4E(4)$
that compose to zero. In other words, we ask whether the space of homomorphisms
\[{\rm HomSpace}:=\Hom_E(\coker m_{1\times 3},\coker m_{4\times 3})\]
is nonzero.

In fact, 
$m_{4\times (1+3)}\circ m_{(1+3)\times 3}=0$
if and only if there is a map $\rho\in {\rm HomSpace}$ such that the diagram
\[\xymatrix{
 3E(1) \ar[r]^{m_{1\times 3}} \ar[d]_{-m_{3\times 3}}& E(2)
 \ar[d]^{m_{4\times 1}}\ar[r] & \coker  m_{1\times 3} \ar[d]^{\rho} \\
 3E(3) \ar[r]_{m_{4\times 3}} &4E(4)\ar[r] & \coker  m_{4\times 3}\\
}\]
 commutes.

After fixing the top horizontal matrix as 
$$m_{1\times 3}=\begin{pmatrix} e_0 & e_1 & e_2    
\end{pmatrix},$$ 
the dimension of the space spanned by the equations $ m_{4\times (1+3)}\circ m_{(1+3)\times 3}=0$ depends only on the bottom horizontal matrix $m_{4\times 3}$, 

The matrix $m_{4\times 3}$ of linear forms in $e_0,\ldots,e_4$ has a  $\PP^3$ of rows, while $m_{1\times 3}$ has a $\PP^2$ of columns. As in Example~\ref{AboRanestad}, there is a linear relation among the equations $m_{4\times (1+3)}\circ m_{(1+3)\times 3} =0$ when the linear form of a column in $m_{1\times 3}$ and the linear forms in a row of the matrix $m_{4\times 3}$ together span only a $2$-dimensional space.

In a general matrix $m_{4\times 3}$, there are ten rows, corresponding to ten points in $\Pn^3$, where the entries are dependent. We refer to these as rank-two rows in~$m_{4\times 3}$.

In an extensive search, we found that nontrivial solutions to $m_{4\times (1+3)}\circ m_{(1+3)\times 3} =0$ may yield smooth surfaces only if at least four rank-two rows of $m_{4\times 3}$ contain a column of $m_{1\times 3}$. However, for a general choice of four or five such rank-two rows, the HomSpace is still $0$.

We found matrices $m_{4\times 3}$ whose four, five, or six rank-two rows contain a column of $m_{1\times 3}$.  In addition, we found matrices $m_{4\times 3}$ with a rank-one row and three rank-two rows that contain a column of $m_{1\times 3}$.

The dimension of the HomSpace varies across these examples.  Whenever it is nonzero, we choose a quadratic syzygy for the matrix $m_{4\times (1+3)}$.
In each case, $m_{4\times (1+3)}$ has five quadratic syzygies in addition to the syzygies $m_{(1+3)\times 3}$. A general choice extends $m_{(1+3)\times 3}$ to a matrix $m_{(1+3)\times (1+3)}$ 
which, together with $m_{4\times (1+3)}$, defines the maps of the Beilinson monad 
\[
\sO(-1)\oplus 3\Omega^3(3)\to \Omega^2(2)\oplus 3\Omega^1(1)\to 4\mathcal{O}.
\]
We then check that the homology of the composition is the ideal sheaf $sI_X(4)$ of a smooth surface.

The smooth surfaces turn out to be either nonminimal K3 or elliptic surfaces.
The canonical divisor $K$ has degree $H\cdot K=12$ and self-intersection number $K^2=-6$. Hence, in the K3 case, the canonical divisor is the sum of six $(-1)$-curves. The degrees of these curves form a partition of $12$ into $6$ parts.
We find smooth examples $X$ of nonminimal K3 surfaces over finite fields whose canonical divisor defines all but two partitions of $12$ into $6$ parts, as listed in the table below. 
Here, the number $N$ counts the number of rank two rows of $m_{4\times 3}$ that contain a column in $m_{1\times 3}$. The next entry is the dimension of the HomSpace. The third entry is the partition of $12$ formed by the degrees of the $(-1)$-components of the canonical divisor $K$ of $X$. The fourth and last entry $(X_5: X)$ lists the components of the scheme $RX_5=(X_5:X)$ residual to $X$ in the intersection of quintic hypersurfaces containing $X$. Each component is listed with  the dimension and degree of $RX_5$, as well as the dimension and degree of $RX_5\cap X$.  
\medskip

\begin{tabular}{l|c|c|c}
$N$ & {\rm dim HomSpace} & Canonical divisor & Components of $(X_5: X)$  \\
\hline
 $  4$ &   $1$   &  $1,2,2,2,2,3$ & $6\times (1,1,(0,6)),(0,1)$\\
 $ 5$&    $1$   &  $1,1,2,2,3,3$ & $5\times (1,1,(0,6))$\\ 
 $6$ &   $ 1$   &  $1,1,1,3,3,3$ & $4\times (1,1,(0,6)),(1,4,(0,21))$\\
 $ 4$ &   $2$   &  $1,1,2,2,2,4$ & $5\times (1,1,(0,6)), (1,2,(0,11)), (0,1)$ \\
 $  6$ &   $2$   &  $1,1,1,2,3,4$ & $4\times (1,1,(0,6)),(1,2,(0,11))$\\
 \hline
  $ 5$ &   $3$   &  $1,1,1,2,2,5$ & $5\times (1,1,(0,6)), (1,1,(0,5))$,\\
  &&&$(2,1,(1,5),(0,1))$ \\ \hline
  $ 6$ & $4$   &  $1,1,1,1,4,4$ & $3\times (1,1,(0,6)),2\times (1,2,(0,11))$\\ \hline
$6$ &   $4$   &  $1,1,1,1,2,6$ & $4\times (1,1,(0,6)), 2\times (1,1,(0,5)),$\\
&&&$(2,1,(1,5),(0,1))$ \\ \hline

 $ 7$ & $5$   &  $1,1,1,1,1,7$ & { $3\times (1,1,(0,6)), 3\times (1,1,(0,5)),$}  \\ 
 &&& $(2,1,(1,5),(0,1))$\\ 
\end{tabular} \\

The two partitions of $12$ of length six for which we do not find examples are
$(2,2,2,2,2,2)$ and $(1,1,1,1,3,5).$

\smallskip
Additionally, we find examples of nonminimal elliptic surfaces with similar data. Here, the canonical divisor has six $(-1)$-curves as components, in addition to an elliptic curve. The degrees of these components are listed as above with the degree of the elliptic curve specified at the end. 

\medskip
\begin{tabular}{l|c|c|l}
$N$ & {\rm dim HomSpace} & Canonical divisor & Components of $(X_5: X)$  \\
\hline
     $7$ & $6$   &  $1,1,1,1,2,2,4$ & $3\times (1,1,(0,6)),(2,2,(1,8,g=9))$\\ 
\end{tabular}

\medskip

\noindent
{\bf Lift to characteristic 0.} Consider two estimates for the dimension of the Hilbert scheme of these surfaces. The first estimate considers for each family of minimal K3 surfaces obtained by blowing down the exceptional curves. Each such family belongs to a $19$-dimensional family of K3 surfaces with a given polarization. The choice of six points to blow up yields a $19+12$-dimensional total family, in which our surfaces form a sublocus of codimension at most $3s=15$, because our surfaces have speciality $s=3$. Accounting for the automorphisms of $\Pn^4$, we see that our surfaces form a family of dimension at least $40$.  
On the other hand, the tangent space of the set of isomorphism classes of monads yielding our surfaces has dimension $40$, so the dimension of our families over finite fields is at most $40$. 

The family of monads is defined over $\ZZ$, and hence so is the total family of ideal sheaves of surfaces. Over $(19)\in \Spec \ZZ$, there is a smooth surface, and the tangent space at the surface is $40$-dimensional, while the dimension of the family is at least~$40$. Therefore, the total family is smooth at the given surface defined over $\ZZ/(19)$, and hence so is the generic fiber.  
\end{example}
\begin{remark} For the nonminimal K3 surfaces with the canonical divisor possessing the $(-1)$-partition  $(1,1,1,3,3,3)$ or $(1,1,1,1,1,7)$, our constructions provide an alternative proof of existence in characteristic $0$.  In the case of the $(-1)$-partition $(1,1,1,3,3,3)$, we may argue as in the case of surfaces in Example \ref{AboRanestad}. The family of monads are defined over $\ZZ$, and the condition for the desired HomSpace is that the  matrix $m_{4\times 3}$ has six rank-two rows that contain a column of $m_{1\times 3}$. This condition is clearly transversal in the space of monads, so the locus of monads whose homology is the ideal of a surface is smooth. Since the surface is smooth at some $(p)\in \Spec \ZZ$, so is the corresponding surface over the generic point.  

For the surfaces with $(-1)$-partition $(1,1,1,1,1,7)$, we find a family of matrices $m_{4\times 3}$ by considering their adjoint $5\times 3$-matrices $M$ on $\Pn^3$.
In the  matrix $M$, the $2\times 3$ submatrix $N$ formed by the two last rows has rank one along three lines through a point $p$ that span the $\Pn^3$. Furthermore, the matrix $M$ has six additional rank-two points, one on each of the three lines through $p$. Thus, the rank-two locus of $M$ shares a scheme of length seven with the rank-one locus of $N$. The matrix $m_{4\times 3}$ adjoint to $M$ then fits into a monad with $5$-dimensional HomSpace and a general map in HomSpace gives a smooth surface over a small $p\in \ZZ$. The closed conditions involved in the construction of $M$ are clearly well defined over $\ZZ$; therefore, so are the monads. We may therefore conclude, as above, that the homology of a monad defined over $\ZZ$ is the ideal sheaf of a surface that is smooth $\mod p$ surface for $(p)\in \Spec  \ZZ$; therefore smooth over $\QQ.$
\end{remark}
\begin{remark} We suspect that the nonminimal elliptic surfaces in Example \ref{Abo surfaces} also lift to char $0$, but we leave this as an open problem.
\end{remark}
\begin{example}[$d=13$, $\pi=16$, $p_g=2$, and $q=0$]\label{linked Abo surfaces}
Let $X$ be a surface in Example~\ref{Abo surfaces} such that $RX_5$ has no projective plane as a component. This means the canonical divisor of $X$ corresponds to one of the following partitions of 12 into 6 parts:  (1) $(1,2,2,2,2,3)$, (2) $(1,1,2,2,3,3)$, (3) $(1,1,1,3,3,3)$, (4) $(1,1,2,2,2,4)$, (5) $(1,1,1,2,3,4)$, and (6) $(1,1,1,1,4,4)$. A general complete intersection of two quintics containing such an $X$ links $X$ to a smooth surface $Y$ of degree $13$ and sectional genus $16$ (cf. Remark \ref{linkage}). We show that if its canonical divisor corresponds to (2), (3), (5), or (6), then $Y$ is an elliptic surface by determining the type of its minimal model. 

By liaison, we obtain the linear system $|K_Y|=|5H-(X\cap Y)|$ on the surface $Y$. Since $h^0 (\sI_X(5))=4$, we deduce that $p_g(Y)=h^0(\sO_Y(K_Y))=2$ and that $|K_Y|$ is a pencil with the curve $C_5$ as a fixed component. Any $6$-secant line $L$ to $X$ is a component of $C_5$, and
$L\cdot K_Y=L\cdot(5H-(X\cap Y))=5-6=-1$, so $L$ is a $(-1)$-line on $Y$. Moreover, via liaison, $\chi(\sO_Y)=3$, which shows that $q = h^0(\sO_Y(K_Y)) = 0$ and that $K_Y^2=-5$ by the double point formula.  

(2) If $C_5$ consists of five $6$-secant lines to $X$, then these five lines are the fixed component of any curve in $|K_Y|$.  Blowing down these lines yields a minimal surface $Y_0$ with $K_{Y_0}^2=0$. In particular, $Y_0$ is a minimal elliptic surface.  

(3) If $C_5$ consists of four $6$-secant lines to $X$ and a $21$-secant rational normal quartic curve $D$, then $D\cdot K_Y=D\cdot(5H-(X\cap Y))=20-21=-1$, so $D$ is a $(-1)$-curve on $Y$, and the four $6$-secant lines to $X$ together with $D$ form the fixed component of any curve in $|K_Y|$. Blowing down these components yields a minimal surface $Y_0$ with $K_{Y_0}^2=0$, and hence $Y_0$ is a minimal elliptic surface. 

(5) If $C_5$ consists of four $6$-secant lines and a $11$-secant conic $D$ to $X$, then $D\cdot K_Y=D\cdot(5H-(X\cap Y))=10-11=-1$, so $D$ is a $(-1)$-curve on $Y$, and the four $6$-secant lines to $X$ together with $D$ form the fixed component of any curve in $|K_Y|$. Blowing down these components yields a minimal surface $Y_0$ with $K_{Y_0}^2=0$, and hence $Y_0$ is a minimal elliptic surface.

(6) Assume that $C_5$ consists of three $6$-secant lines and two $11$-secant conics to $X$. So, the components of $C_5$ are three $(-1)$-lines and two $(-1)$-conics that form the fixed component of any curve in $|K_Y|$. Like above, since $K_Y^2=-5$, the minimal surface $Y_0$ obtained by blowing down these $(-1)$-curves has $K_{Y_0}^2=0$, and hence it is a minimal elliptic surface. 

If the canonical divisor of $X$ corresponds to (1) or (4), then one obtains a linked surface $Y$ with six $(-1)$-lines, or five $(-1)$-lines and one $(-1)$-conic, respectively. Since $K_Y^2=-5$, the minimal surface $Y_0$ obtained by blowing down these exceptional curves has $K_{Y_0}^2=1$, so it is a surface of general type.
\end{example}

\section{Constructions using Moore matrices}
This section showcases examples of abelian and bielliptic surfaces in $\PP^4$ and details their construction methods. These are among the very few known examples of irregular surfaces in $\PP^4$. 

The constructions, described here, utilize syzygy techniques, as outlined in Section~\ref{Section: Hilbert Burch}. A crucial step involves constructing their Hartshorne-Rao modules through Moore matrices, which play a structural role in the geometry of the Horrocks-Mumford bundle, essentially the unique indecomposable vector bundle of rank $2$ known to exist on $\PP^4$ in characteristic $0$. 
\begin{example}[Horrocks-Mumford bundle] 
Let $V$ be a five-dimensional vector space over $\CC$ with basis $\{e_0,\dots,e_4\}$. Consider the $5\times 2$ matrix
$$\varphi =\begin{pmatrix}
    e_1\wedge e_4 & e_2\wedge e_0 & e_3\wedge e_1 & e_4\wedge e_2 & e_0\wedge e_3 \cr
    e_2\wedge e_3 & e_3\wedge e_4 & e_4\wedge e_0 & e_0\wedge e_1 & e_1\wedge e_2 \cr
\end{pmatrix}.$$
It defines one differential in a Tate resolution of shape 
$$\begin{array}{r ccccccc ccccc}
     &-5 &-4 & -3 & -2 & -1 & 0 & 1 & 2 & 3 & 4 & 5 \\\hline 
     -4:&100 & 35 & 4 & . & . & . & . & . & . & . & . \\
     -3:& . & 2 & 10 & 10 & 5 & . & . & . & . & . & . \\
     -2:& . & . & . & . & . & 2 & . & . & . & . & . \\
     -1:& . & . & . & . & . & . & 5 & 10 & 10 & 2 & . \\
     0: & . & . & . & . & . & . & . & . & 4 & 35 & 100 \\
     \end{array}$$
The cohomology of the corresponding monad
\[ 0 \to 5\sO(-1) \to 2 \Omega^2(2) \to 5\sO \to 0\] 
is a vector bundle because it has bounded middle cohomology (c.f., \cite[Example 7.3]{EFS03}). 
It is the famous Horrocks-Mumford bundle $\sF_{HM}$. Its Chern polynomial is
\[ c_t(\sF_{HM})=1-t+4t^2.\]
Apart from tensoring with a line bundle and considering endomorphisms of $\PP^4$, the Horrocks-Mumford bundle remains the only known indecomposable rank-2 vector bundle on $\PP^4$ defined over $\CC$ (c.f., \cites{HM73,DS86,DS87}).
\end{example}

\begin{example}[Abelian surfaces]
The Chern polynomial of $\sF_{HM}(3)$ is $$c_t(\sF_{HM}(3))=1+5t+10t^2.$$ A general section of $\sF_{HM}(3)$ vanishes along a smooth surface $X$ of degree $10$, resulting in a  short exact sequence
\[0 \to \sO \to \sF_{HM}(3) \to \sI_X(5) \to 0.\]
Thus, the Tate resolution of $\sI_X$ 
takes the following form: 
$$\begin{array}{r ccccc ccccc}
       & -1 & 0 & 1 & 2 & 3 & 4 & 5 & 6 & 7 \\\hline 
     -4: & 1 & . & . & . & . & . & . & . & . \\
     -3: & 80 & 45 & 20 & 5 & 1 & . & . & . & . \\
     -2: & . & . & . & 2 & . & . & . & . & . \\
     -1: & . & . & . & . & 5 & 10 & 10 & 2 & . \\
     0: & . & . & . & . & . & . & 3 & 30 & 85 \\
     \end{array}$$
This implies that $X$ has $\pi=6$, $p_g=1$, and $q=2$, from which $K^2=0$ follows. This surface is known to be a minimal abelian surface with a $(1,5)$-polarization and was first found by Commessati \cite{Com19} as a Jacobian surface.

Le Barz's formula \ref{LeBarz} gives $N_6=25$. These twenty five $6$-secant lines are the famous Horrocks-Mumford lines. The surface $X'$ linked to $X$ in a $(5,5)$ complete intersection has $d = 15$, $\pi = 21$, $p_g=1$, and $q=2$, and hence $K_{X'}^2=-25$. The Horrocks-Mumford lines are the twenty five $(-1)$-lines of $X'$, as was discussed in Example \ref{linked Abo surfaces}, and hence $X'$ is an abelian surface
blown-up in 25 points.
\end{example}
Elliptic normal curves and abelian surfaces in $\Pn^4=\PP(W)$ have automorphisms that lift to projective linear transformations of the ambient space.  The relevant group of automorphisms is the Heisenberg group $H_5$ of level $5$. If $\{x_0,\dots ,x_4\}$ is its dual basis, then $H_5$ is generated by \[ \sigma: e_i\mapsto e_{i-1}, \quad \tau: e_i\mapsto e^{2\pi i/5}e_i
\]
as a subgroup of $\SL(V)$. 
To study equations of abelian surfaces and other varieties invariant under the action of $H_5$, Moore \cite{Mo85} (see also \cite{ADHPR97}) introduced the $5\times 5$ matrices called {\it Moore matrices}
\[
M(y,x)=(y_{3i-3j}x_{3i+3j})_{i,j\in \ZZ_5},  
\]
parameterized by points $y\in \Pn^4(y).$ 
Together with matrices 
\[ L(z,x)=(z_{i-j}x_{2i-j})_{i,j\in \ZZ_5}. 
\]
parameterized by points $z\in \Pn^4(z)$, they may be used to build the maps of a Beilinson monad for varieties invariant under the action of $H_5$. Multiplication by $-1$ on $H_5$-invariant elliptic normal curves and abelian surfaces lifts to an involution 
\[\iota: e_i\mapsto e_{-i}
\] on the ambient space. 
Notice that $\iota$ fixes the two linear subspaces $\Pn^1_{-}$, defined by $x_0=x_1+x_4=x_2+x_3=0$, and $\Pn^2_{+}$, defined by $x_1-x_4=x_2-x_3=0$.
If $p\in \Pn^1_{-}(y)$, then $M(p,x)$ is skew-symmetric.

\begin{example}
The first Hartshorne-Rao module $H^1_*(\sF_{HM})$ of $\sF_{HM}$ has Hilbert function $(5,10,10,2,0,\ldots)$, and it can be interpreted as the cokernel of a $5\times 15$ linear matrix consisting of three blocks of Moore matrices 
\[
m_{5\times 15}=(M(p_1,x), M(p_2,x),M(p_3,x)),
\]
where the $p_i$ span $\Pn^2_{+}(y)$ (see \cite{De90, ADHPR97}).     
\end{example}

\begin{example}
Let $G_5$ be the subgroup of $\SL(V)$ generated by $H_5$ and $\iota$.
If $p\in \Pn^1_{-}(x)$, then the $4\times 4$ Pfaffians of $M(y,p)$ generate the ideal of a $G_5$-invariant elliptic curve in $\Pn^4(y)$ with origin at $p$.

If $p\in \Pn^1_{-}(x)$, then the $4\times 4$ minors of $L(\iota(z), p)$ define a $G_5$-invariant elliptic quintic scroll in $\Pn^4(z).$

The determinants of $M(y,x)$ for $y\in \PP^4$ define the Horrocks-Mumford quintics (see \cites{BHM87,Au88}).
\end{example}

\begin{example}
A bielliptic surface $X_b$ of degree $10$ was first discovered by Serrano \cite{Ser90}. Later, an alternative construction method for this surface was introduced using Moore matrices in \cite{ADHPR97}. 

The Tate resolution of $\sI_{X_b}$ takes the following form: 
$$\begin{array}{r cccc ccccc}
       & -1 & 0 & 1 & 2 & 3 & 4 & 5 & 6 & 7\\\hline
      -4: & 1 & . & . & . & . & . & . & . & .\\
      -3: & 80 & 45 & 20 & 5 & . & . & . & . & .\\
      -2: & . & . & . & 1 & . & . & . & . & .\\
      -1: & . & . & . & . & 5 & 10 & 10 & . & .\\
      0: & . & . & . & . & . & . & 1 & 30 & 85
      \end{array}$$
Its first Hartshorne-Rao module $N^b$ has the following $5 \times 15$ presentation matrix: 
\[
(M(p_1,x), M(p_2,x),M(p_3,x)), 
\]
where the $p_i$ are chosen on a $G_5$-invariant elliptic curve $E\subset \Pn^4(y)$. If $\tau_1,\tau_2,\tau_3 \in E\cap \Pn^2_+$ are the nontrivial $2$-torsion points on $E$, then we may choose $p_1=\tau_1$, $p_2=\tau_2$, and $p_3=\tau_3+\rho$, where $\rho$ is a nontrivial $3$-torsion point on $E$. 

The Beilinson monad for $\sI_{X_b}(3)$ yields an exact sequence
\[
0\to 5\mathcal{O}(-1)\oplus \Omega^3(3)\to Syz_1(N^b(1))\to \sI_{X_b}(3)\to 0 
\]
\end{example}

\begin{example}
A nonminimal bielliptic surface $X_{b'}$ of degree $15$ was  constructed using Moore matrices in \cite{ADHPR97}. The Tate resolution of $\sI_{X_{b'}}$ takes the form
$$\begin{array}{r cccc ccccc}
       & -1 & 0 & 1 & 2 & 3 & 4 & 5 & 6 & 7\\\hline
      -4: & 1 & . & . & . & . & . & . & . & .\\
      -3: & 170 & 105 & 55 & 20 & . & . & . & . & .\\
      -2: & . & . & . & 1 & 10 & 10 & 5 & . & .\\
      -1: & . & . & . & . & . & . & . & . & .\\
      0: & . & . & . & . & . & . & 1 & 15 & 50
      \end{array}$$

Form a $5\times 15$ matrix by horizontally concatenating three Moore matrices as follows: 
\[
m^{b'}_{5\times 15}=(M(p_1,x), M(p_2,x),M(p_3,x)),
\]
where the $p_i$ lie on a $G_5$-invariant elliptic curve $E\subset\Pn^4(y)$.  More precisely, if $\tau_1,\tau_2,\tau_3 \in E\cap \Pn^2_+$ are the nontrivial $2$-torsion points on $E$, then we choose $p_1=\tau_1+\rho$, $p_2=\tau_2+\rho$  and $p_3=\tau_3+\rho$, where $\rho$ is a nontrivial $3$-torsion point on $E$.

The matrix $m^{b'}_{5\times 15}$ is the presentation matrix of an artinian module $N'$. The syzygy bundle $Syz_2((N')^\vee (3))$ has rank $21$ and admits a unique sheaf monomorphism 
\[ \phi:20\mathcal{O}(-1)\to Syz_2((N')^\vee (3)), 
\]
and $\sI_{X_{b'}}(3)$ is obtained as the cokernel of $\phi$.
\end{example} 

\begin{example}[Popescu surface,\cite{Pop93}] In his thesis,  Popescu constructed a non-minimal abelian surface of degree 15 whose cohomology table is
$$
\begin{array}{r ccccc ccccc}
      & -1 & 0 & 1 & 2 & 3 & 4 & 5 & 6 & 7 & 8\\\hline
     -4: & 1 & . & . & . & . & . & . & . & . & .\\
     -3: & 170 & 105 & 55 & 20 & 1 & . & . & . & . & .\\
     -2: & . & . & . & 2 & 10 & 10 & 5 & . & . & .\\
     -1: & . & . & . & . & . & . & . & . & . & .\\
      0: & . & . & . & . & . & . & 1 & 15 & 50 & 115
     \end{array}
     $$
The trick is to consider a module $N$ with Hilbert function $(5,10,10,2)$ as the cokernel of a
$5\times 15$ matrix 
$$(M(p_1,x),M(p_2,x),M(p_3,x))$$
where $p_1, p_2$ span $\PP^1_-$ and $p_3$ lies in $\PP^2_+$. 
\end{example}

\section{Summary}
We know of $79$ components of the Hilbert scheme corresponding to
non-degenerate surfaces of Kodaira dimension $<2$.
The distribution into birational classes is as follows: \medskip

\begin{tabular}{|c|c|c|c|c|c|c|c|}\hline
deg  &rational   & ruled $q>0$ &  Enriques & K3 & bielliptic & abelian & elliptic\\ \hline
$\le 8$ & 7 & 2 & & 3& & &2 \\
9 & 2 & & 1& 1& & &1 \\
10& 3 & & 1& 2& 1&1 &2 \\
11&12 & & $2^* $& 5& & &1 \\
12& 7 & & & 10 & & & 4\\
13  & & & $2^{**} $ & 1& & & 4\\
14 &  & & & 1& & & \\
15 &  & & & & 1&2 & \\ \hline
 & 31 & 2 & $\geq 4$ &23 & 2 & 3 & 14 \\ \hline
\end{tabular} \medskip

\noindent
$2^*$ indicates that one of the families is locally incomplete. Thus it might lie on the boundary of the other family. Compare the functions enriqueSurfaceD11S10 and specialEnriquesSchreyerSurface in our package  NongeneralTypeSurfaceInP4 \cite{ARS}.\\
$2^{**}$ indicates that there are two irreducible families, but one might lie on the boundary of the other.\\

For each of these families, our package includes a function producing examples over a finite prime field $\ZZ/p$. When the corresponding component of the Hilbert scheme is unirational (most are) these examples are reductions mod $p$ of examples defined over an open part of $\Spec \ZZ$.

\section{Open problems}\label{problems}

\begin{question}
 Is there an upper bound on the degree of a nonminimal smooth surfaces in $\Pn^4$, or on smooth surfaces with a $(-1)$-line? 

 Bauer, in \cite{Bau95}, showed that if $X$ is an inner projection from a smooth point of a surface in $\Pn^5$, then the degree is at most $9$.

 The surface $Y$ linked in a (5,5) complete intersection to a non-special Alexander surface $X$, is a non-minimal surface of general type of degree 16 with a unique (-1)-curve which coincides with the 6-secant line of $X$; see the documentation for the function linkedNonspecialAlexanderSurfaceD16 in our package \cite{ARS}. 
\end{question} 

\begin{question}
 Is the irregularity of a smooth surface in $\PP^4$ bounded by $2$?
\end{question} 
  Abelian surfaces are examples of smooth surfaces in $\Pn^4$ with irregularity $2$ (cf. \cite{Com19}). 

  The maximal degree of currently known smooth rational surfaces in $\Pn^4$ is $12$ (cf.\cite{AR06}).
\begin{problem}
  Find a smooth rational surface in $\Pn^4$ of degree bigger than twelve.  Find the maximal degree.
\end{problem} 
Decker and Schreyer \cite{DS00} showed that an upper bound for the degree of smooth rational surfaces in $\Pn^4$ is $52$. Ellia \cite{Ellia00} showed that the upper bound is 12 if the surface has a pencil of rational curves of degree at most $4$.

\begin{problem}
  Find a smooth nongeneral-type surface in $\PP^4$ of degree bigger than 15.
\end{problem}

Among the rational surfaces  $X=\PP^2(a;a_1,..,a_r) \subset \PP^4$ of speciality $s>1$, only one family has a known description of the special collection of $r$ points in $\Pn^2$ blown up to $X$ (see Example \ref{second special surface}).
\begin{problem}
  Describe the special collection of $r$ points in $\Pn^2$ for all rational surfaces with $s>1$.
\end{problem}
    \begin{problem} The seven families of rational surfaces in Example \ref{AboRanestad} are distinguished by the number  $n$ of common rows in $m_{3\times 2}$ and columns in $m_{2\times 4}$.  These rows/columns define $n$  planes in $\Pn^4$, and the number of $(-1)$-lines  on the corresponding rational surface is $n+1$.
    Give a theoretical proof of this fact.
        \end{problem}

\begin{problem}
Are there further families of surfaces as in Example \ref{schreyer surfaces},
Example \ref{AboRanestad}, and Example \ref{Abo surfaces}?

 In particular, do there exist a non-minimal K3 surface of degree $12$ and sectional genus $13$, as in Example \ref{Abo surfaces}, whose canonical divisor consists of six $(-1)$-conics? 
 Such surface is expected to have seven $6$-secant lines.

Does there exists a nonminimal K3 surface of degree $12$ and sectional genus $13$, as in Example \ref{Abo surfaces}, whose canonical divisor consists of  four $(-1)$-lines and $(-1)$-curves of degrees $3$ and $5$.    
\end{problem}

\begin{problem} Prove that the elliptic surfaces in Example \ref{Abo surfaces}
can be lifted to characteristic $0$.  
\end{problem}

\begin{problem}
A plausible table for a smooth rational surface of degree $d=14$ and sectional genus $\pi=18$ is shown below.

\[ \begin{array}{rccccc ccccc}
        & -1 & 0 & 1 & 2 & 3 & 4 & 5 & 6 & 7 &8 \\ \hline        
     -4: & 1 & . & . & . & . & . & . & . &. \\
     -3: & 153 & 94 & 49 & 18 & . & . & . &. &. &. \\
     -2: & .    & . & . & .   & 7 & 6 & 1 & .&.  \\
     -1: & .    & . & . & .   & . & . &3 & . & .&.  \\
     0: & . & . & . & . & . & . & . & 17 &56 &126  \\
     \end{array}\]  
\end{problem}
Such a surface has $K^2=-16$, $H.K= 20$ and Le Barz's $N_6=19$. It could be the blow-up of $\PP^2$ at a total of $16+9=25$ points. An abstract surface of this kind would depend on $2\cdot 25-8=42$ parameters. On the other hand imposing the condition that $h^0(\sO(H))=5$ imposes at most $5\cdot 7 =35$ conditions by 
Proposition \ref{codimOfSpecialCollectionOfPoints}. So, the existence appears feasible.

A step forward would be to prove or disprove the existence of smooth
rational surfaces of this type.

\begin{problem}[Find a further rank 2 vector bundle on $\PP^4$]
Up to an automorphism of $\PP^4$ and tensoring with a line bundle, the Horrocks-Mumford bundle remains the only known indecomposable rank 2 vector bundle defined over $\CC$ (cf. \cites{DS86,DS87}).

Using random search over finite fields one might hope to find further bundles: first discover a rank-2 vector bundle over a finite field and subsequently lift it to a bundle defined over a number field.

As a proof of concept we have written a search routine which over $\FF_2=\ZZ/2$ discovers the Horrocks-Mumford bundle up to an automorphism of $\PP^4$ define over $\overline \FF_2$. This is a search in codimension 20, and succeeds in about 3 hours on average. \medskip

A tempting case is the search for a rank 2 vector bundle with Chern classes  $c_1=0$ and $c_2=11$. The Tate resolution of such a bundle could take the form 

$$\begin{array}{r ccccccccc cccccc}
     &-7&-6 &-5 &-4 & -3 & -2 & -1 & 0 & 1 & 2 & 3 & 4 & 5 & 6 & 7 \\ \hline 
     -4:&35&. &.& . & . & . & . & . & . & . & . & . & . & .&.\\
     -3:&45&80& 84 & 69 & 45 & 20 & .&. & . & . & . & . & . & . &. \\
     -2:&.& . & . & . & . &. &11 & 11 & . & . & . & . & . & .&.  \\
     -1:& .&.&. & . & . & . & . & . & 20 & 45 & 69 & 84 & 80 &45 & .\\
     0: & .& .&. & . & . & . & . & . & . & . & . & . &.& 35 & 176 \\
     \end{array}$$
The central $11\times 11$ matrix of linear forms is skew symmetric.
So far we have found $11\times 11$ matrices that yield the desired
complex in homological degrees $-2, \ldots, 1$. However, our examples have unbounded "middle cohomology" and hence do not correspond to a vector bundle.
\end{problem}

\begin{problem}
Most of the components of the Hilbert scheme of smooth nongeneral type surfaces that we discovered are unirational, and all are uniruled due to the action of $\PGL(5)$.
Decide whether some components are not unirational. Candidates are some components of the surfaces in Examples \ref{schreyer surfaces}, \ref{AboRanestad}, and \ref{Abo surfaces}.
\end{problem}

\end{document}